\title{{\bf Capacities associated with scalar signed Riesz kernels, and analytic capacity}}
\author{\Large{\Large Joan Mateu, Laura Prat and Joan Verdera}}
\date{}
\newcommand{\guio}[1]{\nobreakdash-\hspace{0pt}#1}
\begin{document}

\maketitle
\newtheorem{teo}{Theorem}
\newtheorem*{teo*}{Theorem}
\newtheorem{co}[teo]{Corollary}
\newtheorem{lemma}[teo]{Lemma}
\newtheorem{prop}[teo]{Proposition}
\newtheorem*{sublema*}{Sublemma}

\theoremstyle{definition}
\newtheorem{defi}[teo]{Definition}
\newtheorem*{gracies}{Acknowledgements}

\theoremstyle{remark}
\newtheorem*{note}{Note}
\newtheorem*{remark}{Remark}

\newcommand{\Ha}{{\mathcal H}^{\alpha}}
\newcommand{\Hu}{{\mathcal H}^1}
\newcommand{\Rn}{{\mathbb R}^n}
\newcommand{\Rd}{{\mathbb R}^2}
\newcommand{\ep}{\varepsilon}
\newcommand{\N}{\mathbb{N}}
\newcommand{\Z}{\mathbb{Z}}
\newcommand{\C}{\mathbb{C}}
\newcommand{\op}{\operatorname{op}}
\newcommand{\cc}{{\mathcal C}}

\begin{abstract}
Analytic capacity is associated with the Cauchy kernel $1/z$ and the
space $L^\infty$. One has likewise capacities associated with the
real and imaginary parts of the Cauchy kernel and $L^\infty$.
Striking results of Tolsa and a simple remark show that these three
capacities are comparable. We present an extension of this fact to
$\Rn$, $n\geq 3$, involving the vector valued Riesz kernel of
homogeneity $-1$ and $n-1$ of its components.
\end{abstract}

\section{Introduction}
The analytic capacity of a compact subset $E$ of the plane is
defined by
$$ \gamma(E)=\sup|f'(\infty)|$$
where the supremum is taken over those analytic functions on
$\mathbb{C}\setminus E$ such that $|f(z)|\leq 1$,  $z\in
\mathbb{C}\setminus E$.  Sets of zero analytic capacity are exactly
the removable sets for bounded analytic functions, as it is easily
seen, and thus $\gamma(E)$ quantifies the non-removability of $E$.
Early work on analytic capacity used basically one complex variable
methods (see, e.g., \cite{Ahlfors}, \cite{garnett} and \cite{Vi}).
Analytic capacity may be written as
\begin{equation}\label{acap2}
\gamma(E)=\sup |\langle T,1\rangle|
\end{equation}
where the supremum is taken over all complex distributions $T$
supported on $E$ whose Cauchy potential $f=1/z * T$ is in the closed
unit ball of  $L^\infty(\mathbb{C})$. The transition from~$f$ to~$T$
and viceversa is performed through the formulae
$\displaystyle{T=\frac 1\pi\overline{\partial} f}$ and $f= 1/z * T$.


Expression \eqref{acap2} shows that analytic capacity is formally an
analogue of classical logarithmic capacity, in which the logarithmic
kernel has been replaced by the complex kernel $1/z$. This suggests
that real variables techniques could help in studying analytic
capacity, in spite of the fact that the Cauchy kernel is complex. In
fact, significant progress in the understanding of analytic capacity
was achieved when real variables methods, in particular the
Calder\'{o}n-Zygmund theory of the Cauchy singular integral, were
systematically used (\cite{C}, \cite{davidvitushkin},  \cite{mmv},
\cite{mtv}, \cite{semiad} and~\cite{bilipschitz}). A striking result
of Tolsa \cite{semiad} asserts that analytic capacity is comparable
to a smaller quantity, called positive analytic capacity, which is
defined on compact sets $E$ by
$$\gamma_+(E)=\sup \mu(E)$$
where the supremum is taken over those positive measures supported
on $E$ whose Cauchy potential $1/z * \mu$ is in the closed unit ball
of $L^\infty(\mathbb{C})$. In other words, there exists a positive
constant $C$ such that
\begin{equation}\label{tolsa}
\gamma(E) \le C\, \gamma_+(E),
\end{equation}
for each compact subset $E$ of the plane. This implies, in
particular,  that analytic capacity is comparable to planar
Lipschitz harmonic capacity. The Lipschitz harmonic capacity of a
compact subset of $\Rn$ is defined by
\begin{equation}\label{LipCapAlt}
\kappa(E)= \sup|\langle T,1\rangle|
\end{equation}
where the supremum is taken over those real distributions $T$
supported on $E$ such that the vector field
$\displaystyle{\frac{x}{|x|^n}*T}$ is in the unit ball of
$L^\infty(\Rn, \Rn)$. The terminology stems from the fact that
$\kappa(E)$ vanishes if and only if $E$ is removable for harmonic
functions on $\Rn\setminus E$ satisfying a global Lipschitz
condition.
 Notice that the fact that analytic capacity and Lipschitz harmonic capacity in
 the plane are comparable cannot be deduced
 just by inspection from \eqref{acap2} and \eqref{LipCapAlt}. The reason is that
 the distributions involved in the supremum in  \eqref{acap2} are
 complex.

  For a compact subset $E$ of $\Rn$ and $1 \leq i
\leq n$ set
\begin{equation}\label{kappascalar}
\kappa_i (E)= \sup|\langle T,1\rangle|
\end{equation}
where the supremum is taken over those real distributions $T$ such
that the scalar signed $i$-th Riesz potential
\begin{equation}\label{scalarpotential}
\frac{x_i}{|x|^2}*T
\end{equation}
is in the unit ball of $L^\infty(\Rn)$.

In the plane, in spite of what has been said before, it is precisely
a simple complex analytic argument that provides a complete
characterization of  the capacities $\kappa_1$ and $\kappa_2$. For
some positive constant $C$ and for each compact subset $E$ of the
plane, we have

\begin{equation}\label{mark}
C^{-1}\,\kappa_i(E)\le \gamma(E)\le C\,\kappa_i(E),\,\,i=1,2.
\end{equation}
Indeed, if $T$ is a real distribution supported on $E$ such that
$\displaystyle{\frac{x_1}{|x|^2}*T}$ is in the unit ball of
$L^\infty(\Rd)$, then $\displaystyle{\frac{1}{z}*T}$ is an analytic
function on $\C\setminus E$ whose real part is bounded in absolute
value by $1$. Mapping conformally the strip $\{z\in
\C\,:\,|Re(z)|\le 1\}$ onto the unit disk we get a function $f$,
bounded and analytic on $\C\setminus E$, such that $|\langle T,1
\rangle|\le C\,|f'(\infty)|$ (\cite{gamelin}). Hence
$C^{-1}\,\kappa_i(E)\le \gamma(E).$  The second inequality in
\eqref{mark} is an immediate consequence of the striking inequality
\eqref{tolsa}, because the real part of the Cauchy potential of a
positive measure $\mu$ is precisely $ {x_1 /|x|^2} * \mu. $ Notice
that this is not the case if $\mu$ is a complex measure.


Although there are obvious
 formal similarities between the definitions of the set functions in \eqref{acap2}
 and \eqref{kappascalar}, very little is known
about $\kappa_i$ for $n\ge 3$. The reader will find in section
\ref{finiteness} a proof of the elementary fact that $\kappa_i(E)$
is finite for each compact subset $E$ of $\Rn$. The reason why
$\kappa_i$ is difficult to understand in higher dimensions is that
boundedness of the potential \eqref{scalarpotential} does not
provide any linear growth condition on~$T$ in dimensions $n \geq 2$
(then, even in dimension $2$). Concretely, it is not true that
boundedness of \eqref{scalarpotential} implies that for each cube
$Q$ one has
\begin{equation}\label{creixement}
|\langle T,\varphi_Q\rangle |\leq C\,l(Q),
\end{equation}
for each test function $\varphi_Q \in {\mathcal C}^\infty_0(Q)$
satisfying $\|\partial^s \varphi_Q \|_\infty \le l(Q)^{-|s|}$ for
all multi-indexes $s$ of length not greater than some positive
integer $N$. Here $l(Q)$ stands for the side length of $Q$ and we
are adopting the standard notation related to multi-indexes, that
is, $s=(s_1,\dotsc,s_n)$, where each coordinate $s_j$ is a
non-negative integer and $|s|= s_1+\dotsb+s_n$ is the length of $s$.
The reader will find in section 5 three exemples of such phenomenon.
The fact that this examples exist also in dimension $2$, makes the
first inequality in (\ref{mark}) very surprising. Indeed, the
natural conjecture that the capacities $\kappa_i$, $1\le i\le n$,
$n\ge 3$ are semiadditive seems presently completely out of reach.
The reason is that one should develop real variables techniques
which replace the simple minded but extremely powerful complex
variable argument described above.

On the other hand, recall that if $T$ is a compactly supported
distribution with bounded Cauchy potential then
\begin{equation}\label{growthCauchy}
\begin{split}
|\langle T,\varphi_Q\rangle |&=\left |\left\langle T,\frac{1}{\pi
z}*\overline\partial\varphi_Q\right\rangle \right|=
\left|\left\langle \frac{1}{\pi z}*T, \overline\partial\varphi_Q
\right\rangle\right|\\*[7pt] &\leq \frac 1 \pi\left\|\frac 1{z}*T
\right\|_\infty \,\|\overline\partial\varphi_Q\|_{L^1(Q)}\leq \,
\frac 1 \pi\left\|\frac 1{z}*T \right\|_\infty \,l(Q),
\end{split}
\end{equation}
whenever $\varphi_Q$ is normalized by
$\|\overline\partial\varphi_Q\|_{L^1(Q)} \le l(Q). $ The preceding
argument extends to $\Rn$ for even dimensions $n = 2N$ as follows. A
standard Fourier transform computation shows that, for some constant
$c_n$ and each test function $\varphi$, one has
\begin{equation}\label{rep}
\varphi = c_n\, \sum_{j=1}^n \frac{x_j}{|x|^2}*
\partial_j (\triangle^{N-1})\varphi \equiv c_n\, \frac{x}{|x|^2}*
\nabla(\triangle^{N-1})\varphi.
\end{equation}
Let $T$ be a compactly supported real distribution with bounded
vector valued Riesz potential $x / |x|^2 * T$ and let $\varphi_Q$ a
function in $\cc^{n-1}(Q)$. Then
\begin{equation}\label{growthRiesz}
\begin{split}
|\langle T,\varphi_Q\rangle |&=\left |\left\langle T, c_n\,
\frac{x}{|x|^2}* \nabla(\triangle^{N-1})\varphi_Q \right\rangle
\right|= \left|\left\langle c_n\,\frac{x}{|x|^2}*T,
\nabla(\triangle^{N-1})\varphi_Q \right\rangle\right|\\*[7pt] &\leq
C\, \|\frac{x}{|x|^2}*T\|_\infty \,\|\nabla^{n-1}
\varphi_Q\|_{L^1(Q)}\leq  C \,\|\frac{x}{|x|^2}*T\|_\infty \,l(Q),
\end{split}
\end{equation}
provided $\varphi_Q$ is normalized by $\|\nabla^{n-1}
\varphi_Q\|_{L^1(Q)} \le l(Q).$ Here we are adopting the standard
convention of denoting by $\nabla^m \varphi$ the vector
$(\partial^s \varphi)_{|s|=m}$ of all $m$-th order partial
derivatives of $\varphi$ and by $ |\nabla^m \varphi|$ its Euclidean
norm.

For odd dimensions one has to require a stronger normalization
condition. The first remark is that \eqref{rep} can be rewritten as
\begin{equation}\label{rep2}
\varphi = c_n\, \frac{x}{|x|^2}* \nabla(-\Delta)^{(n-2)/2}\varphi,
\end{equation}
which makes sense for all dimensions.  Since
$$
(-\Delta)^{\frac{1}{2}}\varphi = d_n \, \sum_{j=1}^n R_j \partial_j
\varphi \,
$$
for some dimensional constant $d_n$, the $R_j$ being the Riesz
transforms (the Calder\'{o}n-Zygmund operators with Fourier multiplier
$\xi_j / |\xi|$), we have
$$
\nabla(-\Delta)^{(n-2)/2}\varphi = c_n\, \left(
\partial_j \left((\sum_{k=1}^n R_k \partial_k )^{n-2} \varphi \right) \right)_{j=1}^n.
$$
Each component of the vector in the right hand side above is a sum
of terms of the form $ T\,
\partial^s \varphi$, where $s$ is a multi-index of
length $n-1$ and $T$ is a product of $n-2$ Riesz transforms. Hence,
denoting by $\|\cdot\|_{H^1(\Rn)}$ the norm of the real Hardy space
$H^1(\Rn)$, we get
\begin{equation}\label{reproducing}
\|\nabla(-\Delta)^{(n-2)/2}\varphi\|_{L^1(\Rn)} \le C\,
\|\nabla^{n-1} \varphi\|_{H^1(\Rn)},
\end{equation}
where we have set
$$
\|\nabla^{n-1} \varphi\|_{H^1(\Rn)} = \sum_{|s|= n-1}
\|\partial^s \varphi\|_{H^1(\Rn)}.
$$
Recall that a function $f \in H^1(\Rn)$ if and only if $f \in
L^1(\Rn)$ and all its Riesz transforms are also in $L^1(\Rn)$. The
norm of $f$ in $H^1(\Rn)$ is defined as
$$
\|f\|_{H^1(\Rn)} = \|f\|_{L^1(\Rn)}+\sum_{j=1}^n
\|R_j(f)\|_{L^1(\Rn)}.
$$
A basic result is that the Riesz transforms send continuously
$H^1(\Rn)$ into itself, and this is what we used in
\eqref{reproducing}.

For even dimensions, as we have seen before, the Riesz transforms
disappear from the reproducing formula \eqref{rep2} and we get the
better estimate
$$
\|\nabla(-\Delta)^{(n-2)/2}\varphi\|_{L^1(\Rn)} \le C\,
\|\nabla^{n-1} \varphi\|_{L^1(\Rn)}.
$$
This accounts for the difference between even and odd dimensions.

  Let $T$ be a compactly
supported real distribution with bounded vector valued Riesz
potential $x / |x|^2 * T$ and let $\varphi_Q$ a function in
$C^{n-1}(Q)$. Therefore
\begin{equation}\label{growthRiesz1}
\begin{split}
|\langle T,\varphi_Q\rangle |&=\left |\left\langle T, c_n\,
\frac{x}{|x|^2}* \nabla(-\Delta)^{(n-2)/2} \varphi_Q \right\rangle
\right|= \left|\left\langle c_n\,\frac{x}{|x|^2}*T,
\nabla(-\Delta)^{(n-2)/2} \varphi_Q \right\rangle\right|\\*[7pt]
&\leq C\, \|\frac{x}{|x|^2}*T\|_\infty \,\|\nabla^{n-1}
\varphi_Q\|_{H^1(\Rn)}\leq  C \,\|\frac{x}{|x|^2}*T\|_\infty \,l(Q),
\end{split}
\end{equation}
provided $\varphi_Q$ is normalized by $\|\nabla^{n-1}
\varphi_Q\|_{H^1(\Rn)} \le l(Q).$

We say that a distribution $T$ has linear growth if
\begin{equation}\label{growthG}
G(T) = \sup_{\varphi_Q} \frac{|\langle T,\varphi_Q\rangle|}{l(Q)} <
\infty ,
\end{equation}
where the supremum is taken over all $\varphi_Q \in \cc^\infty_0(Q)$
satisfying the normalization inequalities
\begin{equation}\label{normalization}
  \|\partial^s\varphi_Q\|_{H^1(\Rn)} \leq l(Q), \quad
|s|= n-1.
\end{equation}

Notice that no distinction has been made between even or odd
dimensions in the preceding definition and that we have chosen the
stronger Hardy space normalization. This is due to the fact that,
since we will assume in our main result that the distributions we
deal with satisfy the linear growth condition, the stronger the
normalization we require the weaker the assumption we get.

The normalization in the $H^1$ norm is the right condition to
impose, as will become clear later on. For positive Radon measures
$\mu$ in $\Rn$ the preceding notion of linear growth is equivalent
to the usual one (see (\ref{lineargrowth}) below). In subsection 6.5
complete details on this fact are provided.

For a compact set $E$ in $\Rn$ we define $g(E)$ as the set of all
distributions supported on $E$ having linear growth with constant
$G(T)$ at most $1$. 




Our main result is a higher dimensional version of \eqref{mark}. For
a compact $E \subset \Rn$ set
$$
\Gamma(E)=\sup \left\{|\langle T ,1\rangle | \right\}
$$
where the supremum is taken over those real distributions $T$
supported on $E$ such that the vector field
$\displaystyle{\frac{x}{|x|^n}*T}$ is in the unit ball of
$L^\infty(\Rn, \Rn)$.  Hence $\Gamma(E)= \kappa(E)$ for $n=2$.
Finally, for $1\le k \le n$, set
$$
\Gamma_{\hat k}(E)=\sup \left\{|\langle T ,1\rangle| : T \in g(E)
\quad\text{and}\quad \left\|\frac{x_i}{|x|^{2}}*T\right\|_\infty
\leq 1,\, 1 \leq i \leq n,\, i \neq k\right \}.
$$
Thus we require the boundedness of $n-1$ components of the vector
valued potential~$x/|x|^{2}*T$ with Riesz kernel of homogeneity
$-1$.

The requirement of the growth condition in the preceding definition
is vital in obtaining the localization result
\eqref{scalarlocalization}. In subsection 6.4 we show that a growth
condition is necessary for a localization estimate in $L^\infty$.

\noindent Our extension of \eqref{mark} to $\Rn$ is the following.

\begin{teo*}
There exists a positive constant $C$ such that for each compact
set~$E\subset\Rn$ and $1 \le k \le n$
\begin{equation}\label{GammahatGamma}
C^{-1}\, \Gamma_{\hat k}(E) \le \Gamma(E) \le C\, \Gamma_{\hat
k}(E).
\end{equation}
\end{teo*}

The second inequality in \eqref{GammahatGamma} follows immediately
from the definitions of $\Gamma$ and~$\Gamma_{\hat k}$, because any
real distribution $T$ with bounded vector valued Riesz potential has
linear growth as shown in \eqref{growthRiesz} and
\eqref{growthRiesz1}.

The paper is organized as follows. In section 2 we present a sketch
of the proof of the Theorem. It becomes clear that the proof depends
on two facts: the close relationship between the quantities one
obtains after symmetrization of the kernels~$x/|x|^2$
and~$x_i/|x|^2$ and a localization $L^\infty$ estimate for the
scalar kernels $x_i/|x|^2$. In section~3 we deal with the
symmetrization issue and in section 4 with the localization
estimate. In section 5 we discuss three examples showing that
boundedness of $(n-1)$-scalar signed Riesz potentials $x_i/|x|^2 * T
$ does not imply a linear growth estimate on $T$. In section 6 we
present various additional results and examples. We show that
$\kappa_i(E)$ is finite for each compact $E$. We present
counter-examples to two natural inequalities. The first shows that
the obvious extension of the Theorem to the vector valued Riesz
kernels $x/|x|^{1+\alpha}$ and scalar kernels $x_i/|x|^{1+\alpha}$
of homogeneity $\alpha$, $0 < \alpha  < 1$, fails. The second
counter-example shows that the obvious extension of \eqref{mark} to
kernels of homogeneity $-d$, where $d$ is an integer greater than
$1$, also fails. Finally we point out that a growth condition is
necessary to have localization inequalities in $L^\infty$.

Our notation and terminology are standard. For instance,
$\cc^m_0(E),\; 0 \le m \le \infty,$ denotes the set of all functions
with compact support contained in the set $E$ and with continuous
partial derivatives up to order $m$. Cubes will always be supposed
to have sides parallel to the coordinate axis, $l(Q)$ is the side
length of the cube $Q$ and $|Q|=l(Q)^n$ its volume. A good reference
for the theory of the real Hardy space $H^1(\Rn)$ is
\cite[Chapters 3 and 4]{St2}.

We remind the reader that the convolution of two distributions $T$
and $S$ is well defined if $T$ has compact support. In this case the
action of $T*S$ on the test function~$\varphi$ is
$$
\langle T*S, \varphi\rangle = \langle T, S*\varphi\rangle,
$$
which makes sense because $S*\varphi$ is an infinitely
differentiable function on $\Rn$.

\section{Sketch of the proof of the Theorem}

As we remarked before, one only has to prove that
\begin{equation}\label{Gammabarret}
\Gamma_{\hat k}(E) \le C \,\Gamma(E).
\end{equation}
Clearly $\Gamma(E)$ is larger than or equal to
\begin{equation}\label{Gamma+}
\Gamma_+(E)= \sup \mu(E)
\end{equation}
where the supremum is taken over those positive measures $\mu$
supported on $E$ whose  vector valued Riesz potential
$x/|x|^{2}*\mu$ lies in the closed unit ball of
$L^\infty(\Rn,\Rn)$. Now, $\Gamma_+(E)$ is comparable to yet
another quantity $\Gamma_{\op}(E)$, that is, for some positive
constant $C$ one has
\begin{equation}\label{GammaopGamma+}
C^{-1}\, \Gamma_{\op}(E) \le \Gamma_+(E) \le C\,
\Gamma_{\op}(E),
\end{equation}
for each compact set $E \subset\Rn$ (see \cite{tolsa}). Before
giving the definition of $\Gamma_{\op}(E)$ we need to introduce
the Riesz transform with respect to an underlying positive Radon
measure $\mu$ satisfying the linear growth condition
\begin{equation}\label{lineargrowth}
\mu(B(x,r)) \le C\,r, \quad x \in \Rn, \quad r\geq 0.
\end{equation}
Given $\epsilon > 0$ we define the truncated Riesz transform at
level $\epsilon$ as
\begin{equation}\label{Rieszep}
R_{\epsilon}(f \,\mu)(x)=\int_{|y-x|>\epsilon} \frac{x-y}{|x-y|^2}
 f(y)\, d\mu(y), \quad x \in \Rn,
\end{equation}
for $f \in L^2(\mu)$. The growth condition on $\mu$ insures that
each $R_{\epsilon}$ is a bounded operator on $L^2(\mu)$ with
operator norm $ \|R_{\epsilon}\|_{L^2(\mu)}$ possibly depending on~$\epsilon$. We say that the Riesz transform is bounded on
$L^2(\mu)$ when
$$
\|R\|_{L^2(\mu)} = \sup_{\epsilon > 0} \|R_{\epsilon}\|_{L^2(\mu)}
< \infty,
$$
or, in other words, when the truncated Riesz
transforms are uniformly bounded on~$L^2(\mu)$. Call $L(E)$ the
set of positive Radon measures supported on $E$ which satisfy
\eqref{lineargrowth} with $C=1$ . One defines $\Gamma_{\op}(E)$ by
$$
\Gamma_{\op}(E) = \sup \{\mu(E): \mu \in L(E) \quad\text{and}\quad
\|R\|_{L^2(\mu)} \le 1 \}.
$$
From the first inequality in \eqref{GammaopGamma+} we get that, for some constant~$C$ and all compact sets~$E$,
$$
\Gamma_{\op}(E) \le C \, \Gamma(E).
$$
We remind the reader that the first inequality in
\eqref{GammaopGamma+} depends on a simple but ingenious duality
argument due to Davie and Oksendal (see \cite[p.139]{do},
 \cite[Theorem 23, p.107]{llibrechrist} and \cite[Lemma 4.2]{verdera}).
 To prove \eqref{Gammabarret} we have to
estimate $\Gamma_{\hat k}(E)$ by a constant times
$\Gamma_{\op}(E)$. The natural way to perform that is to introduce
the quantity $\Gamma_{\hat k,\op}(E)$ and try the two estimates
\begin{equation}\label{scalarGammaopGamma}
\Gamma_{\hat k}(E) \le C \,\Gamma_{\hat k,\op}(E)
\end{equation}
and
\begin{equation}\label{scalarGammaopGammaop}
\Gamma_{\hat k,\op}(E) \le C \,\Gamma_{\op}(E).
\end{equation}

We define the truncated scalar Riesz transform $R^i_\ep(f
\,\mu)(x)$ associated with the $i$-th coordinate as in~\eqref{Rieszep}
 with the vector valued Riesz kernel replaced by the scalar Riesz kernel
$\displaystyle{\frac{x_i-y_i}{|x-y|^2}}$. We also set
$$
\|R^i\|_{L^2(\mu)} = \sup_{\epsilon > 0}
\|R^i_\ep\|_{L^2(\mu)} ,
$$
and
$$
\Gamma_{\hat k, \op}(E) = \sup \{\mu(E): \mu \in L(E)
\quad\text{and}\quad \|R^i\|_{L^2(\mu)} \le 1,\, 1 \le i\le n,\,
i\neq k \}.
$$

One proves \eqref{scalarGammaopGammaop} by checking that
symmetrization of a scalar Riesz kernel is controlled by the
symmetrization of the scalar Riesz kernels associated with all other
variables. This result was known to Stephen Semmes many years ago
\cite {S}. Here the fact that we are dealing with kernels of
homogeneity $-1$ plays a key role, because, as it is well-known,
they enjoy a special positivity property which is missing in
general. See section 3 for complete details. For other
homogeneities, either the corresponding statements are false or open
(see section 6).

The proof of \eqref{scalarGammaopGamma} depends on Tolsa's
approach to the proof of \eqref{tolsa}, which extends without any
significant change to the higher dimensional setting to give
$$
\Gamma(E) \le C\, \Gamma_+(E).
$$

The main technical point missing in our setting is a localization
result for scalar Riesz potentials. This turns out to be a
delicate issue, which we deal with in section~4. Specifically, we
prove that there exists a positive constant~$C$ such that, for
each compactly supported distribution~$T$ and for each coordinate
$i$, we have
\begin{equation}\label{scalarlocalization}
\left\|\frac{x_i}{|x|^{2}} * \varphi_Q T \right\|_\infty  \leq
C\left(\left\|\frac{x_i}{|x|^{2}} * T \right\|_\infty+G(T)\right)
\end{equation}
for each cube $Q$ and each  $\varphi_Q \in{\mathcal C}^\infty_ 0(Q)$
satisfying $\|\partial^{s} \varphi_Q\|_\infty \le l(Q)^{-|s|}$,
$0\leq|s|\leq n-1$.

This improves significantly a previous localization result in
\cite{mpv}, which, in particular, yields
\begin{equation}\label{vectorlocalization}
\left\|\frac{x}{|x|^{2}} * \varphi_Q T\right \|_\infty  \leq C
\left\|\frac{x}{|x|^{2}} * T \right\|_\infty ,
\end{equation}
for $\varphi_Q$ as above. Inequality \eqref{scalarlocalization}
implies \eqref{vectorlocalization} because boudedness of the vector
valued potential $x/|x|^2*T$ provides a growth condition on~$T$.
Indeed one has (see Lemma~3.2 in \cite{laura1} or
\eqref{growthRiesz} and \eqref{growthRiesz1})
$$
G(T)\leq C\left\|\frac{x}{|x|^2}*T\right\|_{\infty}.
$$

Once \eqref{scalarlocalization} is at our disposition Tolsa's
machinery applies straightforwardly as was already explained in
\cite [Section 2.2]{mpv}. However we will again describe the main
steps in the proof of inequality \eqref{scalarGammaopGamma} at the
end of section~4.

\section{Proof of \boldmath$\Gamma_{\hat k, \op}(E)\leq C \,\Gamma_{\op}(E)$}\label{sectionpermu}

The symmetrization process for the Cauchy kernel introduced in
\cite{me} has been succesfully applied to many problems of analytic
capacity and $L^2$ boundedness of the Cauchy integral operator (see
\cite{mv}, \cite{mmv} and the book \cite{pa}, for example) and also
to problems concerning the capacities, $\gamma_\alpha$,
$0<\alpha<1$, (which are related to the vector valued Riesz kernels
$x/|x|^{1+\alpha}$) and the $L^2$ boundedness of the $\alpha$-Riesz
transforms (see \cite{laura1}, \cite{mpv}, \cite{laura3} and
\cite{pv}). Given $3$ distinct points in the plane, $z_1$, $z_2$ and
$z_3$, one finds out, by an elementary computation that
\begin{equation}\label{curvatura}
c(z_1,z_2,z_3)^2=\sum_{\sigma}\frac
1{(z_{\sigma(1)}-z_{\sigma(3)})\overline{(z_{\sigma(2)}-z_{\sigma(3)})}}
\end{equation}
where the sum is taken over the permutations of the set
$\{1,2,3\}$ and $c(z_1,z_2,z_3)$ is {\em Menger curvature}, that
is, the inverse of the radius of the circle through $z_1$, $z_2$ and~$z_3$. In particular \eqref{curvatura} shows that the sum on the
right hand side is a non-negative quantity.

In $\Rn$ and for $1\leq i\leq n$ the quantity
\begin{equation}
 \label{permui}
\sum_{\sigma}\frac{x^i_{\sigma(2)}-x^i_{\sigma(1)}}{|x_{\sigma(2)}-x_{\sigma(1)}|^2}\frac{x^i_{\sigma(3)}-x^i_{\sigma(1)}}{|x_{\sigma(3)}-x_{\sigma(1)}|^2}
\end{equation}
where the sum is taken over the permutations of the set
$\{1,2,3\}$, is the obvious analogue of the right hand side of~\eqref{curvatura} for the $i$-th coordinate of the Riesz kernel~$x/|x|^2$. Notice that \eqref{permui} is exactly
$$
2\, p_i(x_1,x_2,x_3),
$$
where $p_i(x_1,x_2,x_3)$ is defined as the
sum in~\eqref{permui}  taken only on the three permutations~$(1,2,3)$, $(3,1,2)$ and $(2,1,3)$.

In Lemma \ref{permutacions}, we will show that, given three
distinct points $x_1,x_2,x_3\in\Rn$, the quantity
$p_i(x_1,x_2,x_3)$, $1\leq i\leq n$, is also non-negative. We
will use this remarkable fact to study the $L^2$ boundedness of
the operators associated with the scalar Riesz kernels~$x_i/|x|^2$.

The relationship between the quantity $p_i(x_1,x_2,x_3)$, $1\leq
i\leq n$, and the $L^2$~estimates of the operator with
kernel~$x_i/|x|^2$ is as follows. Take a positive finite Radon
measure~$\mu$ in $\Rn$ with linear growth. Given $\ep>0$ consider
the truncated scalar Riesz transform $R^i_\ep(\mu)$ of $\mu$
associated with the kernel $x_i/|x|^2$, as in section 2. Then we
have (see in~\cite{mv} the argument for the Cauchy integral
operator)
\begin{equation}\label{l2perm}
\left|\int|R_{\ep}^i(\mu)(x)|^2\,d\mu(x)-\frac 1
3p_{i,\ep}(\mu)\right|\leq C\|\mu\|,
\end{equation}
$C$ being a positive constant depending only on $n$, and
$$
p_{i,\ep}(\mu)=\underset{S_\ep}{\iiint}p_i(x,y,z)\,d\mu(x)\,d\mu(y)\,d\mu(z),
$$
with
$$
S_\ep=\{(x,y,z):|x-y|>\ep,\, |x-z|>\ep \text{ and }|y-z|>\ep\}.
$$

\begin{lemma}\label{permutacions}
For $1\leq i\leq n$, and any three distinct points
$x_1,x_2,x_3\in\Rn$ we have
$$
p_i(x_1,x_2,x_3)\geq 0.
$$

Moreover,
\begin{enumerate}
\item If $p_i(x_1,x_2,x_3)=0$ for $n-1$ values of  $i\in\{1,2,\dotsc,n\}$,   then  $x_1$, $x_2$, $x_3$ are aligned.
\item  If the three points $x_1$, $x_2$, $x_3$ are aligned, then $p_i(x_1,x_2,x_3)=0$ for $1\leq i\leq n$.
\end{enumerate}
\end{lemma}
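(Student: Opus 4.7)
The plan is to rewrite $p_i(x_1,x_2,x_3)$ as a perfect square divided by a positive quantity, from which both the non-negativity and the equality analysis fall out immediately.

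First, I would translate so that $x_1$ becomes the origin and set $a=x_2-x_1$, $b=x_3-x_1$, $c=x_3-x_2=b-a$. The three distinguished permutations in the definition of $p_i$ correspond exactly to the three choices of $\sigma(1)\in\{1,2,3\}$ (the two factors in each summand are symmetric in $\sigma(2)$ and $\sigma(3)$), so a direct expansion yields
\[
p_i(x_1,x_2,x_3)=\frac{a^i b^i}{|a|^2|b|^2}+\frac{b^i c^i}{|b|^2|c|^2}-\frac{a^i c^i}{|a|^2|c|^2}.
\]

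Next, I would multiply through by $|a|^2|b|^2|c|^2$ and substitute $b^i=a^i+c^i$ together with $|b|^2=|a|^2+2\langle a,c\rangle+|c|^2$. The mixed terms $a^i c^i|a|^2$ and $a^i c^i|c|^2$ cancel in pairs, and one is left with the key identity
\[
|a|^2|b|^2|c|^2\,p_i(x_1,x_2,x_3)=\bigl|a^i c - c^i a\bigr|^2,
\]
which makes the non-negativity of $p_i$ transparent.

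For item~(2), collinearity of $x_1,x_2,x_3$ is equivalent to $a$ and $c$ being proportional, and in that case $a^i c - c^i a=0$ for every coordinate $i$. For item~(1), I would read off the equality case of the identity: $p_i=0$ amounts to $a^i c = c^i a$. When the points are not collinear, $a$ and $c$ are linearly independent, so this relation forces $a^i=c^i=0$. Therefore, under the hypothesis of non-collinearity together with $p_i=0$ for $n-1$ indices, both $a$ and $c$ would have all but one coordinate equal to zero; they would then be multiples of a single basis vector and hence parallel, contradicting the assumption. So the three points must be aligned.

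The main obstacle I anticipate is establishing the identity in the second step. After expansion, seven terms appear and one must verify that two pairs of cross terms cancel, leaving exactly $(a^i)^2|c|^2-2a^i c^i\langle a,c\rangle+(c^i)^2|a|^2=|a^i c-c^i a|^2$. Once this identity is in place the remainder of the lemma reduces to elementary linear algebra.
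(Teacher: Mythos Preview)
Your proof is correct and follows essentially the same route as the paper. With your notation (the paper uses $a=x_2-x_1$, $b=x_3-x_2$, so your $c$ is their $b$), the paper obtains the identical identity
\[
p_i(x_1,x_2,x_3)=\frac{\sum_{j\neq i}(a_i c_j-c_i a_j)^2}{|a|^2|b|^2|c|^2},
\]
which is your $|a^i c-c^i a|^2$ written componentwise; your linear-independence argument for item~(1) is a minor rephrasing of the paper's coordinate computation.
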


\begin{proof}
Write $a=x_2-x_1$ and $b=x_3-x_2$. Then
\begin{equation*}
\begin{split}
p_i(x_1,x_2,x_3)&=\frac{a_i(a_i+b_i)|b|^2-b_ia_i|a+b|^2+b_i(a_i+b_i)|a|^2}{|a|^2|b|^2|a+b|^2}\\*[7pt]
&=\frac{a_ib_i\left(-2\sum_{j=1}^na_jb_j\right)+\sum_{j=1}^na_i^2b_j^2+b_i^2a_j^2}{|a|^2|b|^2|a+b|^2}\\*[7pt]
&=\frac{\sum_{j=1}^n(a_ib_j-b_ia_j)^2}{|a|^2|b|^2|a+b|^2}=\frac{\sum_{j
:  j\neq i}(a_ib_j-b_ia_j)^2}{|a|^2|b|^2|a+b|^2}\geq 0.
\end{split}
\end{equation*}

Therefore, given three pairwise distinct points $x_1$, $x_2$, $x_3$,
the permutations $p_i(x_1,x_2,x_3)$ vanish if and only if
$a_ib_j=b_ia_j$ for all $1\leq j \leq n$.

Without loss of generality, assume that $p_i(x_1,x_2,x_3)=0$ for
$1\leq i\leq n-1$. Then the following $n(n-1)/2$ conditions hold
$$
a_ib_j=a_jb_i\quad 1\leq i\leq n-1, \quad i+1\leq j\leq n.
$$

These conditions imply that $a=\lambda b$, for some
$\lambda\in\mathbb{R}$, which means the three points $x_1$, $x_2$, $x_3$
lie on the same line.

Assume now that the three points are aligned. Without loss of
generality set $x_1=0$, $x_2=y$ and $x_3=\lambda y$ for some
$\lambda>0$, and $y\in\Rn$. Then for $i,j\in \{1,2,\dotsc, n\}$, we
have
$$
a_ib_j=y_i(\lambda-1)y_j=(\lambda-1)y_iy_j=b_ia_j,
$$
hence $p_i(x_1,x_2,x_3)=0$ for $1\leq i\leq n$.
\end{proof}

If we are in the plane, then Menger curvature can be written as
$$
c(x_1,x_2,x_3)=\frac{4A}{|x_1-x_2||x_1-x_3||x_3-x_2|},
$$
where $A$ denotes the area of the triangle determined by the
points $x_1$, $x_2$, $x_3$. A consequence of Lemma~\ref{permutacions}
and its proof is the following.

\begin{co}\label{permutacions2}
Given three different points $x_1,x_2,x_3\in\mathbb{R}^2,$ we
have
$$
p_1(x_1,x_2,x_3)=p_2(x_1,x_2,x_3)= \frac 1 4 c(x_1,x_2,x_3)^2.
$$
Hence, the quantities $p_1(x_1,x_2,x_3)$ and $p_2(x_1,x_2,x_3)$
are non-negative, and vanish if and only if  $x_1$, $x_2$, $x_3$
are aligned.
\end{co}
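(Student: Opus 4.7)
The plan is to derive the corollary as an immediate consequence of the explicit formula obtained in the proof of Lemma~\ref{permutacions}. Recall that with $a = x_2-x_1$ and $b = x_3-x_2$ one has
$$
p_i(x_1,x_2,x_3) = \frac{\sum_{j\neq i}(a_ib_j-b_ia_j)^2}{|a|^2|b|^2|a+b|^2}.
$$
In the plane ($n=2$), the inner sum collapses to a single term: for $i=1$ the only index $j\neq i$ is $j=2$, giving $(a_1b_2-b_1a_2)^2$, and for $i=2$ we get $(a_2b_1-b_2a_1)^2$, which is the same quantity. Hence $p_1(x_1,x_2,x_3)=p_2(x_1,x_2,x_3)$ is immediate.

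Next I would identify $a_1b_2-a_2b_1$ geometrically. It is the determinant of the $2\times 2$ matrix with rows $a$ and $b$, which is twice the signed area of the triangle spanned by $a$ and $b$. Thus, if $A$ denotes the area of the triangle with vertices $x_1,x_2,x_3$, then $(a_1b_2-a_2b_1)^2 = 4A^2$. Combined with the identifications $|a|=|x_2-x_1|$, $|b|=|x_3-x_2|$, $|a+b|=|x_3-x_1|$, this gives
$$
p_i(x_1,x_2,x_3)=\frac{4A^2}{|x_1-x_2|^2|x_2-x_3|^2|x_1-x_3|^2}.
$$
Comparing with the formula $c(x_1,x_2,x_3)=4A/(|x_1-x_2||x_1-x_3||x_3-x_2|)$ recalled just before the corollary yields $p_i(x_1,x_2,x_3) = \tfrac14\,c(x_1,x_2,x_3)^2$ for $i=1,2$.

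Finally, the non-negativity is already contained in Lemma~\ref{permutacions}, and the vanishing statement follows either from part~(1)--(2) of that lemma or directly from the formula just obtained, since $c(x_1,x_2,x_3)=0$ is equivalent to $A=0$, which means $x_1,x_2,x_3$ are collinear. There is no substantive obstacle here; the only non-routine observation is recognizing the $2\times 2$ determinant as (twice) the triangle area, after which the identification with Menger curvature is bookkeeping.
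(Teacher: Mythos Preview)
Your proof is correct and follows exactly the route the paper intends: the corollary is stated as a consequence of Lemma~\ref{permutacions} and its proof, and you have simply written out the details of specializing the formula $p_i=\sum_{j\neq i}(a_ib_j-b_ia_j)^2/(|a|^2|b|^2|a+b|^2)$ to $n=2$ and matching it with the area formula for Menger curvature. There is nothing to add.
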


In the plane the singular Cauchy transform $C$ with respect to the
underlying measure $\mu$ may be written as
$C(f\mu)=R^1(f\mu)-iR^2(f\mu)$. By Corollary~\ref{permutacions2} and
the $T(1)$-Theorem , we see that $C$ is bounded on~$L^2(\mu)$ if and
only if one of its real components, no matter which one, is bounded
on $L^2(\mu)$. We state this, for emphasis, as a corollary.

\begin{co}\label{l2bound}
If $\mu$  is a compactly supported positive measure in the plane
having linear growth, the Cauchy transform of $\mu$ is bounded on
$L^2(\mu)$ if and only if $R^i$ is bounded on $L^2(\mu)$ for one
$i\in\{ 1,2\}$.
\end{co}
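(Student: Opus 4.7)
The plan is to chain together the permutation identity \eqref{l2perm}, Corollary \ref{permutacions2}, the analogous identity for the Cauchy kernel due to Melnikov (equation \eqref{curvatura}), and the non-homogeneous $T(1)$-theorem of Nazarov--Treil--Volberg for antisymmetric Calder\'on--Zygmund kernels with respect to measures of linear growth.

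One direction is immediate: since the Cauchy transform decomposes as $C(\mu)=R^1(\mu)-iR^2(\mu)$, if $C$ is bounded on $L^2(\mu)$ then both $R^1$ and $R^2$ are. For the converse, suppose $R^i$ is bounded on $L^2(\mu)$ for some fixed $i\in\{1,2\}$. By the non-homogeneous $T(1)$-theorem, boundedness on $L^2(\mu)$ of an antisymmetric Calder\'on--Zygmund operator associated with a measure of linear growth is equivalent to the uniform testing condition
\[
\sup_{\ep>0}\int_Q |R^i_\ep(\chi_Q\mu)|^2\,d\mu \le C\,\mu(Q),
\]
for every cube $Q$.

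Next I would apply the permutation identity \eqref{l2perm} to the restricted measure $\chi_Q\mu$, which also has linear growth, to get
\[
\int_Q |R^i_\ep(\chi_Q\mu)|^2\,d\mu = \tfrac{1}{3}\,p_{i,\ep}(\chi_Q\mu) + O(\mu(Q)).
\]
By Corollary \ref{permutacions2}, in the plane $p_i(x_1,x_2,x_3)=\tfrac{1}{4}c(x_1,x_2,x_3)^2$ for both $i=1$ and $i=2$, hence $p_{i,\ep}(\chi_Q\mu)=\tfrac{1}{4}c^2_\ep(\chi_Q\mu)$, where $c^2_\ep$ denotes the truncated total Menger curvature. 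Therefore the testing condition above is equivalent to the uniform curvature bound
\[
c^2_\ep(\chi_Q\mu) \le C\,\mu(Q),
\]
and this bound is derived \emph{from the same curvature quantity} regardless of whether we started with $i=1$ or $i=2$.

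Finally, Melnikov's identity \eqref{curvatura} yields, exactly as in \cite{mv},
\[
\int_Q |C_\ep(\chi_Q\mu)|^2\,d\mu = \tfrac{1}{6}\,c^2_\ep(\chi_Q\mu) + O(\mu(Q)),
\]
so the uniform curvature bound just obtained translates into the testing condition for the Cauchy operator. A second application of the non-homogeneous $T(1)$-theorem, now to the antisymmetric Cauchy kernel, gives boundedness of $C$ on $L^2(\mu)$, completing the equivalence. The main conceptual obstacle is invoking the correct version of the $T(1)$-theorem valid for non-doubling measures of linear growth; once this is in place, the identity $p_1=p_2=\tfrac14 c^2$ makes the two scalar components genuinely interchangeable from the $L^2$ viewpoint, and the proof reduces to unfolding definitions.
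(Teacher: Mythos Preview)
Your proof is correct and follows essentially the same approach as the paper: localize the permutation identity \eqref{l2perm} to $\chi_Q\mu$ (the paper uses balls rather than cubes), invoke Corollary~\ref{permutacions2} to identify $p_1=p_2=\tfrac14 c^2$, and apply the non-doubling $T(1)$-theorem. The only cosmetic difference is that you route the converse through Melnikov's curvature identity for the Cauchy kernel, whereas the paper (in the paragraph following the corollary) compares the $p_i$ directly to conclude that boundedness of one $R^i$ forces boundedness of the other and hence of $C=R^1-iR^2$; both arguments are equivalent once $p_1=p_2$ is known.
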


For a positive measure $\mu$ with linear growth we have, by \eqref{l2perm},
\begin{equation*}
\begin{split}
\|R_\ep(\mu)\|_{L^2(\mu)}^2&=\sum_{j=1}^n\int|R^j_{\ep}(\mu)(x)|^2\,d\mu(x)\\*[7pt]
&=\frac1{3}\sum_{j=1}^n\underset{S_{\ep}}{\iiint}p_j(x,y,z)\,d\mu(x)\,d\mu(y)\,d\mu(z)+
O(\|\mu\|)\\*[7pt]
&\leq \frac2{3}\sum_{\begin{subarray}{l}j=1\\j\neq
i\end{subarray}}^{n}\underset{S_{\ep}}{\iiint}p_j(x,y,z)\,d\mu(x)\,d\mu(y)\,d\mu(z)+
O(\|\mu\|),
\end{split}
\end{equation*}
where the last inequality follows easily from the formula
$$
p_i(x_1,x_2,x_3)=\frac{\sum_{j\neq i}\left((x_2^i-x_1^i)(x_3^j-x_2^j)-(x_2^j-x_1^j)(x_3^i-x_2^i)\right)^2}{|x_2-x_1|^2|x_3-x_2|^2|x_3-x_1|^2},\quad 1\leq i\leq n.
$$
The above estimate can be localized replacing $\mu$ by $\chi_B\mu$
for each ball~$B$. Therefore, appealing to the $T(1)$-Theorem for
non necessarily doubling measures \cite{ntv1}, if $n-1$ components
$R^j$ are bounded on $L^2(\mu)$ (no matter which $n-1$ components),
then the whole vector valued operator~$R$ is bounded on $L^2(\mu)$.

\begin{teo}\label{vector}
Let $\mu$ be a non-negative measure with compact support in $\Rn$
and linear growth. Then the vector valued Riesz operator~$R$
associated with the kernel $x/|x|^2$ is bounded on $L^2(\mu)$
provided any set of $n-1$ components $R^j$ of $R$ are bounded on
$L^2(\mu)$.
\end{teo}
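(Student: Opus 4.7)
The strategy is dictated by what is laid out just before the statement: combine the symmetrization identity \eqref{l2perm} with a pointwise comparison between the permutation quantities $p_j$, and then invoke the non-homogeneous $T(1)$-theorem. Fix an index $i$ and assume $\|R^j\|_{L^2(\mu)} \le M$ for all $j \ne i$.

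The first step is the pointwise inequality
$$p_i(x_1,x_2,x_3) \;\le\; \sum_{j\ne i} p_j(x_1,x_2,x_3),$$
from which $\sum_{j=1}^n p_j \le 2\sum_{j\ne i} p_j$ follows. This is immediate from the formula
$$p_k(x_1,x_2,x_3) \;=\; \frac{\sum_{\ell\ne k}(a_k b_\ell - a_\ell b_k)^2}{|a|^2|b|^2|a+b|^2}, \qquad a=x_2-x_1,\ b=x_3-x_2,$$
obtained in the proof of Lemma~\ref{permutacions}: restricting the inner sum in $\sum_{j\ne i} p_j$ to the single index $\ell = i$ already recovers the numerator of $p_i$, because $(a_j b_i - a_i b_j)^2 = (a_i b_j - a_j b_i)^2$.

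The second step is to apply \eqref{l2perm} to each component of $R_\ep(\mu)$ and use the inequality above to obtain
$$\|R_\ep(\mu)\|_{L^2(\mu)}^2 \;\le\; \frac{2}{3}\sum_{j\ne i} p_{j,\ep}(\mu) + C\|\mu\| \;\le\; 2\sum_{j\ne i}\|R^j_\ep(\mu)\|_{L^2(\mu)}^2 + C'\|\mu\| \;\le\; C''(1+M^2)\|\mu\|,$$
uniformly in $\ep$. The same reasoning applies verbatim with $\mu$ replaced by $\chi_B\mu$ for any ball $B$, since both the linear growth of $\mu$ and the $L^2$ bounds for the $R^j$ pass to the restricted measure; this gives the uniform testing-type bound
$$\sup_{\ep > 0}\,\|R_\ep(\chi_B\mu)\|_{L^2(\chi_B\mu)}^2 \;\le\; C\,\mu(B)\qquad\text{for every ball }B.$$

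The final step is to invoke the non-homogeneous $T(1)$-theorem of Nazarov, Treil and Volberg \cite{ntv1} for the antisymmetric vector valued Calder\'on-Zygmund kernel $x/|x|^2$: the testing condition on balls just obtained is exactly its hypothesis, and the conclusion is the uniform $L^2(\mu)$ boundedness of the truncations $R_\ep$, hence of $R$. The real conceptual obstacle is the pointwise inequality of step one, which depends on the three-point positivity phenomenon that is specific to Riesz kernels of homogeneity $-1$; as the paper notes, this is precisely where the theorem's natural extensions to other homogeneities break down. Once that positivity is in hand, the remaining ingredients are standard manipulations and a citation to the non-doubling $T(1)$ theorem.
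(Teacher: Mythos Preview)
Your proof is correct and follows the paper's own argument essentially line by line: the pointwise inequality $p_i \le \sum_{j\ne i} p_j$ drawn from the explicit formula in Lemma~\ref{permutacions}, the conversion via \eqref{l2perm}, the localization to $\chi_B\mu$, and the appeal to the non-doubling $T(1)$-theorem of \cite{ntv1} are exactly the steps the paper sketches just before stating the theorem. There is nothing to add.
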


The inequality \eqref{scalarGammaopGammaop} is an immediate
consequence of Theorem \ref{vector}.

\section{Proof of \boldmath$\Gamma_{\hat k}(E)\leq C\;\Gamma_{\hat k,\op}(E)$}

The proof of the inequality $\Gamma_{\hat k}(E)\leq C\, \Gamma_{\hat
k,\op}(E)$ is based in two ingredients, the localization of scalar
Riesz potentials and the exterior regularity of $\Gamma_{\hat k}$,
which we discuss below.

\subsection{Localization of scalar Riesz potentials}\label{local}

When analyzing the argument for the proof of \eqref{tolsa} (see
Theorem 1.1 in \cite{semiad}) one realizes that one of the technical
tools used is the fact that the Cauchy kernel~$1/z$ localizes in the
uniform norm. By this we mean that if $T$ is a compactly supported
distribution such that $1/z*T$ is a bounded measurable function,
then $1/z*(\varphi \, T)$  is also bounded measurable for each
compactly supported ${\mathcal C}^1$ function~$\varphi$. This is an old
result, which is simple to prove because $1/z$ is related to the
differential operator~$\overline{\partial}$ (see~\cite[Chapter V]{garnett}). The same localization result can be proved easily in any
dimension for the kernel~$x/|x|^n$, which is, modulo a
multiplicative constant, the gradient of the fundamental solution of
the Laplacian. Again the proof is reasonably straightforward because
the kernel is related to a differential operator (see~\cite{paramonov} and~\cite{verderacm}).

In \cite[Lemma 3.1]{mpv} we  were concerned with the localization of
the vector valued $\alpha$-Riesz kernel $x/|x|^{1+\alpha}$,
$0<\alpha<n$. For general values of $\alpha$  there is no
differential operator in the background and consequently the
corresponding localization result becomes far from obvious (see
Lemma 3.1 in \cite{mpv}).

\vspace*{7pt}

We now state the new localization lemma we need.

\begin{lemma}\label{localization1}
Let $T$ be a compactly supported distribution in $\Rn$, with linear growth,
such that $(x_i / |x|^2) *T$ is in $L^\infty(\Rn)$ for some $i$, $1\leq i \leq n$.
Let $Q$ be a cube and assume that $\varphi_Q \in \cc^\infty_0(Q)$
satisfies $\|\partial^{s} \varphi_Q\|_\infty \le l(Q)^{-|s|}$,
$0\leq|s|\leq n-1$.
 Then $(x_i / |x|^2) * \varphi_Q T$ is in $L^\infty(\Rn)$ and
$$
\left\|\frac{x_i}{|x|^2}*\varphi_Q T\right\|_\infty\leq C\left(\left\| \frac{x_i}{|x|^2}*T\right\|_\infty+G(T)\right),
$$
for some positive constant $C=C(n)$ depending only on $n$.
\end{lemma}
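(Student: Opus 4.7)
The plan is to establish the estimate pointwise: writing $K_i(z) = z_i/|z|^2$ and $F(x) := (K_i * \varphi_Q T)(x)$, show that $|F(x)| \le C(\|K_i*T\|_\infty + G(T))$ for every $x \in \Rn$, splitting the analysis according to whether $d(x, Q)$ exceeds $2\,l(Q)$ or not. In the \emph{far} case $d(x,Q)\geq 2\,l(Q)$, the function $y \mapsto \varphi_Q(y)K_i(x-y)$ is smooth on a neighbourhood of $Q$, and $F(x) = \langle T, \varphi_Q(\cdot) K_i(x-\cdot)\rangle$ is directly defined. A Leibniz expansion, together with $|\partial^{s_1}\varphi_Q|\le l(Q)^{-|s_1|}$ and $|\partial^{s_2}K_i(x-y)|\le C\,|x-y|^{-1-|s_2|}$, shows that the dominant Leibniz term of order $n-1$ has pointwise size $\sim l(Q)^{-(n-1)}/d(x,Q)$, giving $\max_{|s|=n-1}\|\partial^s(\varphi_Q\,K_i(x-\cdot))\|_{L^1(Q)}\leq C\, l(Q)/d(x,Q)\leq C\, l(Q)$. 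Linear growth then yields $|F(x)|\leq C\, G(T)$.

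For the \emph{near} case $d(x,Q) < 2\,l(Q)$, I would pick $\eta \in \cc^\infty_0(B(x,2\,l(Q)))$ with $\eta \equiv 1$ on $B(x,l(Q))$ and $\|\partial^s \eta\|_\infty \leq C\, l(Q)^{-|s|}$, and decompose
\begin{equation*}
\varphi_Q(y)K_i(x-y) = h_x(y) + \varphi_Q(x)\,\eta(y)K_i(x-y) + \varphi_Q(y)(1-\eta(y))K_i(x-y),
\end{equation*}
where $h_x(y):=[\varphi_Q(y)-\varphi_Q(x)]\,\eta(y)K_i(x-y)$. Using $\eta K_i(x-\cdot) = K_i(x-\cdot) - (1-\eta)K_i(x-\cdot)$, the middle piece contributes $\varphi_Q(x)(K_i*T)(x)$, controlled by $\|K_i*T\|_\infty$, plus a tail $-\varphi_Q(x)\langle T,(1-\eta)K_i(x-\cdot)\rangle$. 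Combining this tail with the third piece, and using that $T$ has compact support, a dyadic partition of unity by cubes of side $l(Q)$ on the shells $\{|y-x|\asymp D_k\}$ with $D_k = 2^k l(Q)$ for $k\geq 0$ shows that the $k$-th shell, which contains $\sim 2^{k(n-1)}$ such cubes, contributes $\sim 2^{k(n-1)}\cdot l(Q)^n/D_k^n\cdot G(T)=2^{-k}G(T)$; the geometric series sums to at most $C\, G(T)$.

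The delicate ingredient is $\langle T, h_x\rangle$. Although $|h_x|\leq C/l(Q)$ by the mean value theorem, $h_x$ fails to be $\cc^{n-1}$ at $y=x$, so linear growth cannot be invoked on $h_x$ as a single test function. The plan is to write $h_x = \sum_{j\geq 0} h_x\,\psi_j$, where $\{\psi_j\}$ is a smooth partition of unity subordinate to the dyadic annuli $A_j = \{2^{-j-1}l(Q)<|y-x|<2^{-j+1}l(Q)\}$, and to cover each $A_j$ by a cube $Q_j$ with $l(Q_j)\asymp 2^{-j}l(Q)$. Each $h_x\psi_j$ is smooth and supported in $Q_j$; if one can prove
\begin{equation*}
\max_{|s|=n-1}\,\|\partial^s(h_x\psi_j)\|_{L^1(Q_j)}\leq C\, 2^{-j},
\end{equation*}
then linear growth yields $|\langle T, h_x\psi_j\rangle|\leq C\, 2^{-j}G(T)$, and summation in $j$ delivers $|\langle T, h_x\rangle|\leq C\, G(T)$.

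The main obstacle is verifying this $L^1$-derivative bound. A Leibniz expansion on the three factors $\varphi_Q(y)-\varphi_Q(x)$, $K_i(x-y)$ and $\psi_j(y)$ produces many competing terms; the crucial balance is that the cancellation $|\varphi_Q(y)-\varphi_Q(x)|\lesssim 2^{-j}$ (when no derivative falls on $\varphi_Q$) absorbs the singular factors $|\partial^{s_2} K_i(x-y)|\lesssim (2^{-j}l(Q))^{-1-|s_2|}$ and $|\partial^{s_3}\psi_j|\lesssim (2^{-j}l(Q))^{-|s_3|}$. Integrating any such term over $A_j$, whose volume is $(2^{-j}l(Q))^n$, collapses the estimate to size $\leq C\, 2^{-j}$, but confirming this uniformly over all admissible redistributions of the $n-1$ derivatives (including the terms where derivatives fall on $\varphi_Q$, which gain $l(Q)^{-|s_1|}$ instead of $2^{-j}$) is the technical heart of the argument and is where the linear growth hypothesis on $T$ becomes indispensable.
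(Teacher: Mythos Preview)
Your far-case estimate and your inner dyadic decomposition for $h_x$ are both fine, and they match the spirit of the paper's treatment of the term $A_1$. The genuine gap is in your outward ``tail'' estimate for the combined piece $(\varphi_Q(y)-\varphi_Q(x))(1-\eta(y))K_i(x-y)$. For $|y-x|\gg l(Q)$ this function equals $-\varphi_Q(x)\,K_i(x-y)$ on the support of each bump $\psi_P$, and the Leibniz rule forces you to differentiate $\psi_P$: if $P$ has side $l(Q)$ then the dominant term in $\partial^s(K_i(x-\cdot)\psi_P)$, $|s|=n-1$, has size $l(Q)^{-(n-1)}D_k^{-1}$, so $\|\partial^s(\cdot)\|_{L^1(P)}\asymp l(Q)/D_k$, \emph{not} $l(Q)^n/D_k^n$. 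Moreover a full dyadic shell of thickness $D_k$ contains $\asymp 2^{kn}$ cubes of side $l(Q)$, not $2^{k(n-1)}$; the shell contribution is then $\asymp 2^{k(n-1)}G(T)$, which diverges. Using instead $O(1)$ cubes of side $D_k$ per shell gives a per-shell bound $\asymp G(T)$, which still does not sum. In short, $K_i(x-y)$ alone decays only like $|x-y|^{-1}$, and that is one power short of making any shell decomposition converge via $G(T)$ alone.

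The paper repairs exactly this point with an extra idea you are missing: a prelocalization lemma (Lemma~\ref{prelocalization}) produces a point $x_0\in\tfrac14 Q$ with $|(K_i*\psi_Q T)(x_0)|\le C\,G(T)$, hence $|(K_i*(1-\psi_Q)T)(x_0)|\le C(\|K_i*T\|_\infty+G(T))$. One then subtracts $K_i(x_0-\cdot)$ and estimates $\langle T,(1-\psi_Q)(K_i(x-\cdot)-K_i(x_0-\cdot))\rangle$ on dyadic rings of width $2^j l(Q)$. The gradient estimate gives the difference an extra factor $l(Q)/D_j$, so each ring now contributes $\asymp 2^{-j}G(T)$ and the series converges. (The paper also exploits that $K_i*\varphi_Q T$ is harmonic off $\overline{Q}$ and invokes the maximum principle to reduce immediately to $x\in\tfrac32 Q$, which makes your separate far-case computation unnecessary.) Without the reference point $x_0$ and the subtraction, your tail argument cannot close.
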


With analogous techniques and replacing $G(T)$ by $G_\alpha(T)$
(see section 6 \label{contraexemplealfa} for a definition) one can
prove that the above lemma also holds in $\Rn$ for the scalar
$\alpha$-Riesz potentials
$$
\frac{x_i}{|x|^{1+\alpha}}*T,\quad 0<\alpha<n,\;\alpha\in\Z.
$$

For the proof of Lemma \ref{localization1} we need the following.

\begin{lemma}\label{prelocalization}
 Let $T$ be a compactly supported distribution in $\Rn$ with linear growth and assume that
 $Q$ is a cube and $\varphi_Q \in \cc^\infty_0(Q)$
satisfies $\|\partial^{s} \varphi_Q\|_{\infty} \le l(Q)^{-|s|}$, $0\le |s|\le
n-1$. Then, for each coordinate $i$, the distribution $(x_i / |x|^2)
* \varphi_Q T$ is a locally integrable function and there exists a
point $x_0 \in \frac{1}{4}Q$ such that
$$
\left|\left(\frac{x_i}{|x|^2}*\varphi_QT\right)(x_0)\right|\leq C \, G(T),
$$
where  $C=C(n)$ is a positive constant depending only on $n$.
\end{lemma}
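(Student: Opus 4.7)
The plan is a duality argument. Set $K(z) := z_i/|z|^2$ and $f := K * (\varphi_Q T)$. Since $K$ is odd, the identity
\[
\int f(x_0)\,h(x_0)\,dx_0 = -\langle T,\,\varphi_Q \cdot (K * h)\rangle
\]
holds for every $h \in \cc^\infty_0(\Rn)$. The aim is to show that, for every $h \in \cc^\infty_0(\frac{1}{4}Q)$ with $\|h\|_\infty \le 1$,
\[
|\langle T,\,\varphi_Q\cdot(K * h)\rangle| \le C\,G(T)\,l(Q)^n.
\]
Taking the supremum over such $h$ then establishes that $f$ is a locally integrable function on $\frac{1}{4}Q$ with $\|f\|_{L^1(\frac{1}{4}Q)} \le C\,G(T)\,l(Q)^n$, and Chebyshev's inequality produces a point $x_0 \in \frac{1}{4}Q$ with $|f(x_0)| \le C\,G(T)$.

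The key reduction is to prove the estimate $\|\partial^s(\varphi_Q\cdot(K * h))\|_{L^1(Q)} \le C\,l(Q)^n$ for every multi-index $|s|=n-1$, since by the definition of $G(T)$ this directly gives the bound on the pairing. I would expand by Leibniz,
\[
\partial^s(\varphi_Q \cdot (K * h)) = \sum_{t\le s}\binom{s}{t}\,\partial^t\varphi_Q\cdot\partial^{s-t}(K * h),
\]
and estimate each term via H\"older's inequality with conjugate exponents $p_t = n/(|t|+1)$ and $p_t' = n/(n-|t|-1)$. The factor involving $\varphi_Q$ is handled by Poincar\'e together with the Gagliardo--Nirenberg--Sobolev embedding $W^{n-1-|t|,1} \hookrightarrow L^{p_t}$, giving $\|\partial^t\varphi_Q\|_{L^{p_t}(Q)} \le C\,l(Q)$ for $0\le|t|\le n-1$. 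The factor involving $K * h$ is controlled by the homogeneity of $K$: when $|s-t|\le n-2$, $\partial^{s-t}K$ has homogeneity $-1-|s-t|>-n$ and is locally integrable with $\int_{|z|\le R}|\partial^{s-t}K|\le C\,R^{n-1-|s-t|}$, yielding $\|\partial^{s-t}(K * h)\|_\infty\le C\,l(Q)^{n-1-|s-t|}$; when $|s-t|=n-1$, $\partial^{s-t}K$ is a Calder\'on--Zygmund kernel, and its $L^{p_t'}$-boundedness gives $\|\partial^{s-t}(K * h)\|_{L^{p_t'}(Q)} \le C\,l(Q)^{n-1}$. The product of the two factors is at most $C\,l(Q)^n$ in every case.

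The hard part is the arithmetic matching: the homogeneity $-1$ of $K$, the Sobolev exponents $p_t$, and the local sizes of $\partial^{s-t}K$ must conspire so that every Leibniz term scales exactly as $l(Q)^n$, independent of $t$. The endpoint case $|t|=0$, $|s-t|=n-1$ is the most delicate, since it forces one out of the pointwise bound regime into the $L^{n/(n-1)}$-boundedness of the singular integral associated to $\partial^{n-1}K$, paired with the endpoint embedding $\varphi_Q \in L^n(Q)$. Once these estimates are verified, the duality identity together with Chebyshev's inequality conclude both the local integrability of $f$ on $\frac{1}{4}Q$ and the existence of the required point $x_0$.
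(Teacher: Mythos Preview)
Your proposal is correct and follows essentially the same route as the paper's proof: both use the duality identity $\langle K*\varphi_Q T,\psi\rangle=\langle T,\varphi_Q(K*\psi)\rangle$, verify that $\varphi_Q(K*\psi)$ (suitably normalized) satisfies the $L^1$ normalization inequalities \eqref{normalization} via a Leibniz expansion, handle the lower-order terms by kernel homogeneity together with the Sobolev/Maz'ya embedding $\|\partial^t\varphi_Q\|_{L^{n/(|t|+1)}}\le C\|\nabla^{n-1}\varphi_Q\|_{L^1}$, and treat the top-order term $t=0$ through the Calder\'on--Zygmund boundedness of $\partial^s K$; the $L^1$ bound on $\tfrac14 Q$ then yields the point $x_0$ by averaging. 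The only cosmetic differences are that the paper tests against $\psi\in L^q$ for a generic $n/(n-1)<q<\infty$ (obtaining $f\in L^p(2Q)$ for all $1\le p<n$, a slightly stronger conclusion) while you take $h\in L^\infty$ and go straight to $L^1$, and that the paper uses a single H\"older exponent throughout whereas you choose the Sobolev-critical pair $(p_t,p_t')$ term by term; one small omission is that you only establish local integrability on $\tfrac14 Q$, whereas the lemma asserts it globally---this is handled in the paper by noting that $K*\varphi_Q T$ is $C^\infty$ off $\overline{Q}$ and running the same duality estimate on $2Q$.
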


\begin{remark}
Since the function $f=(x_i/|x|^2)*\varphi_QT$ is only locally
integrable, it may look strange to evaluate $f$ at a point. Indeed
we show that the mean of $f$ on $\displaystyle{\frac 1 4 Q}$ is
bounded by $C\,G(T)$ and then at many Lebesgue points of $f$ the
above stated inequality holds, doubling the constant if
necessary.
\end{remark}

\begin{proof}[Proof of Lemma \ref{prelocalization}]
Without loss of generality set $i=1$ and write
$k^1(x)=x_1/|x|^2$. Since $k^1 * \varphi_Q T$ is infinitely
differentiable off the closure of ${Q}$, we only need to show
that $k^1* \varphi_Q T$~is integrable on $2Q$. We will actually
prove a stronger statement, namely, that $k^1* \varphi_Q T$~is in
$L^p(2Q)$ for each $p$ in the interval $1\leq p< \frac{n}{n-1}$. Indeed, fix
any $q$ satisfying $n <q < \infty$ and call $p$ the dual
exponent, so that $1 < p< \frac n{n-1}$.  We need to estimate the action of
$k^1 * \varphi_Q T$ on functions $\psi \in \cc^\infty_0(2Q)$ in
terms of $\|\psi\|_q $. We clearly have
$$
\langle k^1 * \varphi_Q T, \psi\rangle = \langle T, \varphi_Q(k^1 * \psi)\rangle.
$$
We claim that, for an appropriate
dimensional constant $C $, the test function
\begin{equation}\label{testf}
\frac{\varphi_Q(k^1 * \psi)}{C \,l(Q)^{\frac{n}{p}-1} \|\psi\|_q}
\end{equation}
satisfies the normalization inequalities \eqref{normalization} in
the definition of $G(T)$.  Once this is proved, by the definition of $G(T)$ we get
\begin{equation*}\label{Lq}
|\langle k^1 * \varphi_Q T, \psi\rangle | \le C\,
l(Q)^{\frac{n}{p}}\|\psi\|_q \,G(T),
\end{equation*}
and so
\begin{equation*}\label{Lp}
\|k^1 * \varphi_Q T \|_{L^p(2Q)} \le C\,
l(Q)^{\frac{n}{p}}G(T).
\end{equation*}
Hence
\begin{equation*}
\begin{split}
\frac{1}{|\frac{1}{4}Q|}\int_{\frac{1}{4} Q} |(k^1 * \varphi_Q
T)(x)|\,dx &\le 4^n\frac{1}{|Q|}\int_Q |(k^1 * \varphi_Q
T)(x)|\,dx \\*[7pt]
& \le 4^n\left(\frac{1}{|Q|}\int_Q |(k^1
* \varphi_Q T)(x)|^p \,dx\right)^{\frac{1}{p}}\\*[7pt]
& \le C\,G(T),
\end{split}
\end{equation*}
which completes the proof of Lemma \ref{prelocalization}.

To prove the claim we need the following auxiliary remark. We let
$R_i$ stand for the Riesz transforms, that is, the Calder\'{o}n-Zygmund
operators with kernel $c_n\, x_i/|x|^{n+1}$ and multiplier $\xi_i
/|\xi|.$
\begin{sublema*}\label{sublema}
Assume that $f_Q$ is a test function supported on the square $Q$
satisfying
\begin{equation*}\label{gradientn-1}
\|\partial^s f_Q\|_{L^1(Q)}\le l(Q), \quad |s|=n-1,
\end{equation*}
and
\begin{equation}\label{growthcondition}
\|R_i(\partial^sf_Q)\|_{L^1(2Q)}\le C l(Q).
\end{equation}
Then $$\|R_i(\partial^s f_Q)\|_{L^1(\Rn)}\le Cl(Q)\;\;\;\mbox{ for
}|s|=n-1\;\;\mbox{ and }1\le i\le n .$$
\end{sublema*}
\begin{proof}
For any multi-index $s$ with $|s|=n-1$, integrating by parts to
take one derivative on the Riesz kernel we obtain
\begin{equation*}
\begin{split}
\|R_i(\partial^s f_Q)\|_{L^1((2Q)^c)}&=c_n
\,\int_{(2Q)^c}|\int_Q\partial^s
f_Q(z)\frac{z_i-y_i}{|z-y|^{n+1}}\,dz|\,dy\\&\le C \|\nabla^{n-2}
f_Q\|_{L^1(Q)}\,l(Q)^{-1} \\& \le C\,\|\nabla^{n-2}
f_Q\|_{L^{n/n-1}(Q)},
\end{split}
\end{equation*}
where the last estimate follows from H\"{o}lder's inequality.

A well known result of Maz'ya (see \cite[1.1.4, p. 15]{mazya} and
\cite[1.2.2, p. 24]{mazya}) states that
\begin{equation}\label{Mazya}
\|\nabla^m f_Q\|_{\frac{n}{1+m}}\leq C\int|\nabla^{n-1}f_Q|, \quad
0\leq m \leq n-1 .
\end{equation}
Applying this for $m=n-2$ we get
\begin{equation*}
\begin{split}
\|R_i(\partial^s f_Q)\|_{L^1((2Q)^c)}& \le C\|\nabla^{n-2}
f_Q\|_{n/n-1} \\& \le C\|\nabla^{n-1} f_Q\|_1 \le C\,l(Q).
\end{split}
\end{equation*}
\end{proof}

By the Sublemma, to prove the claim we only have to show that for
$|s|=n-1$,
\begin{equation}\label{n}
\|\partial^s \left(\varphi_Q\,(k^1 * \psi)\right)\|_{L^1(Q)} \le
C\,l(Q)^{\frac{n}{p}}\|\psi\|_q .
\end{equation}
and
\begin{equation}\label{rieszineq}
\|R_i(\partial^s(\varphi_Q(k^1*\psi)))\|_{L^1(2Q)}\le
C\,l(Q)^{\frac{n}{p}}\|\psi\|_q,\quad 1\le i\le n.
\end{equation}

By H\"{o}lder's inequality and the fact that the Riesz transforms
preserve $L^q(\Rn)$, $1<q<\infty$, we get

\begin{equation*}
\begin{split}\|R_i(\partial^s(\varphi_Q(k^1*\psi)))\|_{L^1(2Q)}&\le C l(Q)^{\frac n p}\|R_i(\partial^s(\varphi_Q(k^1*\psi)))\|_{L^q(\Rn)}\\\\&\le Cl(Q)^{\frac n p}\|\partial^s(\varphi_Q(k^1*\psi))\|_{L^q(Q)}.
\end{split}
\end{equation*}
Hence \eqref{rieszineq} and \eqref{n} follow from

\begin{equation}\label{lq}\|\partial^s(\varphi_Q(k^1*\psi))\|_{L^q(Q)}\le C\|\psi\|_q.\end{equation}

By Leibnitz formula
\begin{equation*}\label{Leibnitz}
\begin{split}
\partial^s \left(\varphi_Q \,(k^1 * \psi)\right) & = \varphi_Q\,\partial^{s}(k^1*\psi) + \sum_{|r|=1}^{n-1}
c_{s,r}\,\partial^r\varphi_Q \;\partial^{s-r}(k^1*\psi)\\
&  \equiv A+B,
\end{split}
\end{equation*}
where the last identity is a definition of $A$ and $B$.

To estimate the $L^q$-norm of the function in $B$ we remark that, since $|s|=n-1$,
$$
|\partial^{s-r}k^1(x)| \le C\, |x|^{-(n-|r|)}, \quad 1 \le |r| \le
n-1 ,
$$
and then, by H\"{o}lder's inequality and $\|\partial^r
\varphi_Q\|_{\infty} \le l(Q)^{-|r|}$, $1\le r\le n-1$, we see
that
\begin{equation*}
\begin{split}
\|\partial^r\varphi_Q \, \partial^{s-r}(k^1*\psi)\|_{L^q(Q)}&\le
 C\,
 \|\partial^r\varphi_Q\|_{\infty}\left(\int_Q\left(\int_{2Q}\frac{|\psi(y)|}{|x-y|^{n-|r|}}dy\right)^qdx\right)^{1/q}\\&\le C\|\partial^r\varphi_Q\|_\infty\|\psi\|_q\left(\int_Q\left(\int_{2Q}\frac{dy}{|y-x|^{p(n-|r|)}} \right)^{\frac q p}dx\right)^{\frac 1 q}\\\\&\le
 C\,l(Q)^{-|r|}\|\psi\|_q l(Q)^{\frac n q}l(Q)^{\frac{n}{p}-n+|r|}\\\\&=C\|\psi\|_q,
\end{split}
\end{equation*}
for each $1\leq |r|\leq n-1$.
 We therefore  conclude that

$$\|B\|_q\le C\,\sum_{|r|= 1}^{n-1}\|\partial^r\varphi_Q\,\partial^{s-r}(k^1*\psi)\|_q\le C\,\|\psi\|_q.$$

 We turn now to the term $A$.  
We remark that, for $|s| = n-1$,
\begin{equation}\label{cz}
\partial^{s}k^1*\psi = c\,\psi + S(\psi),
\end{equation}
where $S$ is a smooth homogeneous convolution Calder\'{o}n-Zygmund
operator and $c$ a constant depending on~$s$. This can be seen by
computing the Fourier transform of $\partial^{s}k^1$ and then using
that each homogeneous polynomial can be decomposed in terms of
homogeneous harmonic polynomials of lower degrees (see \cite[3.1.2
p.~69]{St}). Since Calder\'{o}n-Zygmund operators preserve $L^q(\Rn)$,
$1 <q < \infty$, we get, using that $\|\varphi_Q\|_\infty\le C$,
$$
\|A\|_q  \le C\,\|\psi\|_q.
$$
This finishes the estimate of term $A$ and the proof of \eqref{lq}.
\qed
\renewcommand{\qedsymbol}{}
\end{proof}

\begin{proof}[Proof of Lemma \ref{localization1}]
Without loss of generality take $i=1$. Consider first a point $x \in
\Rn\setminus \frac 3 2 Q .$ Then $k^1(x-y)\,\varphi_Q(y)$ is in
$\cc^\infty_0(Q)$ as a function of $y.$ We claim that $c
\;l(Q)\,k^1(x-y)\,\varphi_Q(y)$ satisfies the normalization
condition \eqref{normalization} for some small constant $c$
depending only on $n$. Once the claim is proved we get
$$
|(k^1 * \varphi_Q T)(x)| = |\langle T, k^1(x-y)\,\varphi_Q(y)
\rangle| \le c^{-1}\,G(T).
$$
Straightforward estimates yield
$$
|\partial^s_y (k^1(x-y)\,\varphi_Q(y))| \le C\, l(Q)^{-n}, \quad
|s|=n-1,
$$
which shows that $\partial^s_y (k^1(x-y)\,\varphi_Q(y))$ is a
constant multiple of an atom, whence the claim.

We are then left with the case  $x\in \frac 3 2 Q$.  Since $k^1*T$
and $\varphi_Q$ are bounded functions, we can write
$$
 |(k^1*\varphi_QT)(x)|\leq|(k^1*\varphi_QT)(x)-\varphi_Q(x)(k^1*T)(x)|+\|\varphi_Q\|_\infty\|k^1*T\|_\infty.
$$

Let $\psi_Q\in{\mathcal C}_0^{\infty}(\Rn)$ be such that $\psi_Q\equiv
1$ in $2Q$, $\psi_Q\equiv 0$ in $(4Q)^c$ and
$\|\partial^s\psi_Q\|_\infty\leq C_s\,l(Q)^{-|s|}$, for each
multi-index~$s$. Then one is tempted to write
\begin{multline*}
 |(k^1*\varphi_QT)(x)-\varphi_Q(x)(k^1*T)(x)|\leq|\langle T,\psi_Q(y)(\varphi_Q(y)-\varphi_Q(x))k^1(x-y)\rangle|\\*[5pt]
 +\|\varphi_Q \|_{\infty}|\langle T,(1-\psi_Q(y))k^1(x-y)\rangle|.
\end{multline*}
The problem is that the first term in the right hand side above
does not make any sense because $T$ is acting on a function of $y$
which is not necessarily differentiable at the point $x$.  To
overcome this difficulty one needs to resort to a standard
regularization process. Take $\chi \in \cc^\infty(B(0,1))$ such
that $\int \chi(x)\,dx = 1$ and set $\chi_\ep(x)=
\ep^{-n}\,\chi(x/\ep)$. The plan is to estimate, uniformly on $x$
and $\epsilon$,
\begin{equation}\label{reg}
|(\chi_\ep*k^1*\varphi_QT)(x)-\varphi_Q(x)(\chi_\ep*k^1*T)(x)|.
\end{equation}
 Clearly \eqref{reg} tends, as $\ep$ tends to zero,
 to
$$
|(k^1*\varphi_QT)(x)-\varphi_Q(x)(k^1*T)(x)|,
$$
for almost all $x \in \Rn$, which allows the transfer of uniform
estimates. We now have
\begin{equation*}\label{dif}
\begin{split}
|(\chi_\ep*k^1*\varphi_QT)(&x)-\varphi_Q(x)(\chi_\ep*k^1*T)(x)|\\*[7pt]
&\le
|\langle T,\psi_Q(y)(\varphi_Q(y)-\varphi_Q(x))
(\chi_\ep*k^1)(x-y)\rangle|\\*[7pt]
&\quad+
\|\varphi_Q\|_{\infty}|\langle T,(1-\psi_Q(y))(\chi_\ep*k^1)(x-y)\rangle|\\*[7pt]
&=A_1+A_2,
\end{split}
\end{equation*}
where the last identity is the definition of $A_1$ and
$A_2$. To deal with term $A_1$ set
$$k^{1,x}_\ep(y)=(\chi_\ep*k^1)(x-y).$$ We claim that, for an
appropriate small dimensional constant $c$, the test function
$$f_Q(y)=c\;l(Q)\psi_Q(y)(\varphi_Q(y)-\varphi_Q(x))k_\ep^{1,x}(y),$$
satisfies the normalization inequalities \eqref{normalization} in
the definition of $G(T)$, with $\varphi_Q$ replaced by $f_Q$ and $Q$
by $4Q$. If this is the case, then
$$A_1\leq c^{-1} l(Q)^{-1}|\langle T,f_Q\rangle|\leq C\,G(T).$$

To prove the normalization inequalities \eqref{normalization} for
the function $f_Q$ it is enough, by the Sublemma, to show that the
following holds
\begin{equation}\label{growth1}
\|\partial^s f_Q\|_{L^1(4\,Q)}\le Cl(Q)
\end{equation}
\begin{equation}\label{growth2}
\|R_i(\partial^s f_Q)\|_{L^1(8\,Q)}\le Cl(Q),
\end{equation}
for $1\le i\le n$ and $|s|=n-1$.

For each $q>1$ let $p$ be its dual exponent. By H\"older's
inequality we have
\begin{equation*}
\|R_i(\partial^s f_Q)\|_{L^1(8Q)}\le Cl(Q)^{\frac n
p}\|R_i(\partial^s f_Q)\|_{L^q(\Rn)}\le Cl(Q)^{\frac n
p}\|\partial^s f_Q\|_{L^q(4Q)},
\end{equation*}
because the Riesz transforms preserve $L^q(\Rn)$. Therefore,
\eqref{growth1} and \eqref{growth2} are a consequence of
\begin{equation}\label{lq2}
\|\partial^s f_Q\|_{L^q(4Q)}\le Cl(Q)^{\frac n q-|s|},\;\;|s|=n-1.
\end{equation}

To prove \eqref{lq2} we first notice that the regularized kernel
$\chi_\ep*k^1$ satisfies the inequalities
\begin{equation}\label{regkernel}
|(\chi_\ep* \partial^s \,k^1)(x)| \le \frac{C}{|x|^{1+|s|}}, \quad
x \in \Rn\setminus \{0\}\quad \text{and}\quad 0 \le |s| < n-1,
\end{equation}
where $C$ is a dimensional constant, which, in particular, is
independent of $\epsilon$. This can be proved by standard
estimates which we omit. For $|s| = n-1$ the situation is a little
bit more complicated. By \eqref{cz} we have
\begin{equation*}
(\chi_\ep* \partial^s \,k^1)(x) = c\,\chi_\ep(x) + (\chi_\ep *
S)(x),
\end{equation*}
where $S$ is a smooth homogeneous convolution Calder\'{o}n-Zygmund
operator. As such, its kernel $H$ satisfies the usual growth
condition $|H(x)| \le C/ |x|^n$. From this is not difficult to
show that
\begin{equation}\label{regcz}
|(\chi_\ep * S)(x)| \le \frac{C}{|x|^{n}},\quad x \in \Rn\setminus
\{0\},
\end{equation}
for a dimensional constant $C$.

By Leibnitz formula, for $|s|=n-1$,
\begin{equation}\label{Leibnitz2}
\begin{split}
\partial^s \left(\psi_Q(\varphi_Q -\varphi_Q(x))k_{\ep}^{1,x}\right) & =
\psi_Q \,(\varphi_Q -\varphi_Q(x)) \partial^{s}\,
k_\ep^{1,x}\\*[7pt] &\quad +\sum_{|r|=1}^{n-1}c_{r,s}\,
\partial^r(\psi_Q(\varphi_Q
-\varphi_Q(x)))\;\partial^{s-r}\, k_\ep^{1,x},
\end{split}
\end{equation}
and so
\begin{equation*}
\begin{split}
\|\partial^s f_Q\|_{L^q(4Q)}&\leq Cl(Q)\left(\int_{4Q}|\psi_Q(y)
\,(\varphi_Q(y) -\varphi_Q(x))\, \partial^sk_{\ep}^{1,x}(y)|^q
\,dy\right)^{\frac 1 q}\\*[7pt] &\quad +Cl(Q)\sum_{|r|=1}^{n-1}\left(\int_{4Q}|\partial^r
\left(\psi_Q(\varphi_Q -\varphi_Q(x)\right)\,
\partial^{s-r}k_{\ep}^{1,x}(y) |^q\,dy\right)^{\frac 1 q}\\*[7pt]&
 =A_{11}+A_{12}.
\end{split}
\end{equation*}
Using \eqref{regkernel} one obtains
\begin{equation*}
\begin{split}
A_{12}&\le C l(Q)\sum_{|r|=1}^{n-1}\frac
1{l(Q)^{|r|}}\left(\int_{4Q}|(\partial^{s-r}k_\ep^{1,x})(y)|^q\,dy\right)^{\frac 1 q}
\\*[7pt] &\leq Cl(Q)^{\frac n q-|s|}.
\end{split}
\end{equation*}
To estimate $A_{11}$ we resort to \eqref{regcz}, which yields
\begin{equation*}
\begin{split}
A_{11}&=Cl(Q)\left(\int_{4Q} |\psi_Q(y)(\varphi_Q(y)
-\varphi_Q(x))\partial^sk_{\ep}^{1,x}(y) |^q \,dy\right)^{\frac 1 q}\\*[7pt] &\leq
Cl(Q)\|\partial\varphi_Q\|_\infty\left(\int_{4Q}
\frac{dy}{|y-x|^{q(n-1)}}\, dy\right)^{\frac 1 q}\\*[7pt] &\leq Cl(Q)^{\frac n q-|s|}
\end{split}
\end{equation*}

We now turn to $A_2$. By Lemma \ref{prelocalization}, there exists a
point $x_0\in Q$ such that $|(k^1*\psi_QT)(x_0)|\leq C\, G(T)$.
 Then
$$|(k^1*(1-\psi_Q)T)(x_0)|\leq C\,(\|k^1*T\|_\infty +G(T)).$$
The analogous inequality holds as well for the regularized
potentials appearing in $A_2$, uniformly in $\epsilon$, and
therefore
$$A_2\leq C\,|\langle T,(1-\psi_Q)(k_\ep^{1,x}-k_\ep^{1,x_0})\rangle|+C\,(\|k^1*T\|_\infty +G(T)).$$

To estimate $|\langle
T,(1-\psi_Q)(k_\ep^{1,x}-k_\ep^{1,x_0})\rangle|$, we decompose
$\Rn \setminus \{x\}$ into a union of rings $$N_j=\{z\in
\Rn:2^j\,l(Q)\leq|z-x|\leq 2^{j+1}\,l(Q)\},\quad j\in\mathbb{Z},$$
and consider functions $\varphi_j$ in ${\mathcal
C}^\infty_0(\Rn)$, with support contained in
$$N^*_j=\{z\in
\Rn:2^{j-1}\,l(Q)\leq|z-x|\leq 2^{j+2}\,l(Q)\},\quad
j\in\mathbb{Z},$$ such that $\|\partial^s\varphi_j\|_\infty\leq C
\,(2^j\,l(Q))^{-|s|}$, $|s| \geq 0$, and $\sum_j\varphi_j=1$ on
$\Rn\setminus\{x\}$. Since $x\in\frac 3 2 Q$  the smallest ring
$N^*_j$ that intersects $(2Q)^c$ is $N^*_{-3}$. Therefore  we have
\begin{equation*}
\begin{split}
 |\langle T,(1-\psi_Q)(k_\ep^{1,x}-k_\ep^{1,x_0})\rangle|
 &=\left|\left\langle T,\sum_{j\geq -3}\varphi_j(1-\psi_Q)(k_\ep^{1,x}-k_\ep^{1,x_0})\right\rangle\right|\\*[7pt]
 &\leq\left|\left\langle T,\sum_{j\in I}\varphi_{j}(1-\psi_Q)(k_\ep^{1,x}-k_\ep^{1,x_0})\right\rangle \right|\\*[7pt]
&\quad+\sum_{j\in J}|\langle T,\varphi_{j}(k_\ep^{1,x}-k_\ep^{1,x_0})\rangle|,
\end{split}
\end{equation*}
where $I$ denotes the set of indices $j\geq -3$ such that the
support of $\varphi_j$ intersects $4Q$  and $J$ the remaining
indices, namely those $j \geq -3 $ such that $\varphi_j$ vanishes
on $4Q$. Notice that the cardinality of $I$ is bounded by a
dimensional constant.

Set
$$g =C\,l(Q)\sum_{j\in I}\varphi_j(1-\psi_Q)\,(k_\ep^{1,x}-k_\ep^{1,x_0}),$$
and for $j\in J$
$$g_j=C\,2^{2j}\,l(Q)\,\varphi_j\,(k_\ep^{1,x}-k_\ep^{1,x_0}).$$
We show now that the test functions $g$ and $g_j$, $j\in J$,
satisfy the normalization inequalities \eqref{normalization} in
the definition of $G(T)$ for an appropriate choice of the (small)
constant $C$ . Once this is available, using the linear growth
condition of $T$ we obtain
\begin{equation*}
\begin{split}
 |\langle T,(1-\psi_Q)(k_\ep^{1,x}-k_\ep^{1,x_0})\rangle |&\leq C l(Q)^{-1}|\langle T,g\rangle|\\*[7pt]
 &\quad + C \sum_{j\in J} (2^{2j}l(Q))^{-1}|\langle T,g_j\rangle |\\*[7pt]
 &\leq C\,G(T) + C\sum_{j\geq -3}2^{-j}\,G(T)\leq C\,G(T),
\end{split}
\end{equation*}
which completes the proof of Lemma \ref{localization1}.

Checking the normalization inequalities for $g$ and $g_j$ is easy.
First notice that the support of $g$ is contained in a square
$\lambda\,Q$ for some dilation factor $\lambda$ depending only on
$n.$ On the other hand the support of $g_j$ is conained in $
2^{j+2}\,Q.$  By the Sublemma, we have to show that for $|s|=n-1$,
$1\le i \le n$,
\begin{equation}\label{lq3}
\|\partial^s g\|_{L^1(\lambda\, Q)}\le C
l(Q),\;\;\;\;\|R_i(\partial^s g)\|_{L^1(2\lambda\,Q)}\le Cl(Q)
\end{equation}
and for $1\le j\le n$,
\begin{equation}\label{lq4}
\|\partial^s g_j\|_{L^1(2^{j+2}\,Q)}\le C 2^j
l(Q),\;\;\;\;\|R_i(\partial^s g_j)\|_{L^1(2^{j+3}\,Q)}\le C 2^j
l(Q).
\end{equation}

As before, let $1 < q < \infty$ and call $p$ the dual exponent to
$q.$  Apply H\"older's inequality  and the fact that the Riesz
transforms preserve $L^q(\Rn)$ to obtain
\begin{equation*}
\|R_i(\partial^s g)\|_{L^1(2\lambda \,Q)}\le Cl(Q)^{\frac n
p}\|R_i(\partial^s g)\|_{L^q(\Rn)}\le Cl(Q)^{\frac n
p}\|\partial^s g\|_{L^q(\lambda\,Q)}
\end{equation*}
and
\begin{equation*}
\|R_i(\partial^s g_j)\|_{L^1(2^{j+3} Q)}\le
C\left(2^jl(Q)\right)^{\frac n p}\|R_i(\partial^s
g_j)\|_{L^q(\Rn)}\le C\left(2^jl(Q)\right)^{\frac n p}\|\partial^s
g_j\|_{L^q(2^{j+2}\,Q)}
\end{equation*}
hold. Therefore,  \eqref{lq3} and \eqref{lq4} follow from
\begin{equation}\label{final1}
\|\partial^s g\|_{L^q(\lambda\,Q)}\le C\,l(Q)^{\frac n q-|s|}
\end{equation}
and
\begin{equation}\label{final2}
\|\partial^s g_j\|_{L^q(2^{j+2}Q)}\le C\left(2^jl(Q)\right)^{\frac
n q-|s|}
\end{equation}
respectively.

To show \eqref{final1} we take $\partial^s$ in the definition of
$g$, apply Leibnitz's formula and estimate in the supremum norm each
term in the resulting sum . We get
$$
\|\partial^s g\|_\infty \le C\,l(Q) \sum_{|r|=0}^{n-1}
\frac{1}{l(Q)^{|r|}}\; \frac{1}{l(Q)^{1+|s|-|r|}} = C\,
\frac{1}{l(Q)^{|s|}},
$$
 which yields \eqref{final1} immediately.

For \eqref{final2}, applying a gradient estimate, we get
$$
|\partial^{s-r} k_\ep^{1,x}(y)- \partial^{s-r} k_\ep^{1,x_0}(y)|
\le C\, \frac{l(Q)}{(2^j\,l(Q))^{2+|s|-|r|}}, \quad y \in N^*_j,
\quad j \in J.
$$
Hence
$$
\|\partial^s g_j\|_\infty \le C\,2^{2j}\,l(Q) \sum_{|r|=0}^{n-1}
\frac{1}{(2^j\,l(Q))^{|r|}}\; \frac{l(Q)}{(2^j\,l(Q))^{2+|s|-|r|}} =
C\, \frac{1}{(2^j\,l(Q))^{|s|}},
$$
which yields \eqref{final2} readily.
\end{proof}

\subsection{A continuity property for the capacity \boldmath$\Gamma_{\hat k}$}

In this section we prove a continuity property for the capacity
$\Gamma_{\hat k}$, $1\leq k\leq n$, which will be used in the
proof of inequality \eqref{scalarGammaopGamma}. Although we state
the result only for the capacities $\Gamma_{\hat k}$, $1\leq k\leq
n$, Lemma \ref{extreg} below holds for the capacities $\kappa_i$,
$1\leq i\leq n$, defined in the Introduction, because the proof
does not use any growth condition on distributions with bounded
scalar Riesz potential.

\begin{lemma}\label{extreg}
 Let $\{E_j\}_j$ be a decreasing sequence of compact sets, with
 intersection the compact set $E\subset\Rn$. Then, for $1\leq k\leq n$,
$$\Gamma_{\hat k}(E)=\lim_{j\to\infty}\Gamma_{\hat k}(E_j).$$
\end{lemma}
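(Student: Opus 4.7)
The plan is standard: the easy inequality $\Gamma_{\hat k}(E) \le \lim_j \Gamma_{\hat k}(E_j)$ will follow from monotonicity, and the reverse will be obtained by a weak-$*$ compactness argument extracting an admissible distribution for $E$ from near-extremal distributions on the $E_j$. Since $E \subset E_j$, every distribution admissible in the definition of $\Gamma_{\hat k}(E)$ is admissible for $E_j$, so $\Gamma_{\hat k}(E) \le \Gamma_{\hat k}(E_j)$; the sequence $\Gamma_{\hat k}(E_j)$ is non-increasing and $L := \lim_j \Gamma_{\hat k}(E_j)$ exists and is $\ge \Gamma_{\hat k}(E)$.

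For each $j$ I would pick $T_j$ with $\operatorname{spt} T_j \subset E_j \subset E_1$, $G(T_j) \le 1$, $\|x_i/|x|^2 * T_j\|_\infty \le 1$ for every $i \ne k$, and $\langle T_j,1\rangle \ge \Gamma_{\hat k}(E_j) - 1/j$. The bound $G(T_j) \le 1$, together with the Maz'ya-type estimate used in the proof of Lemma \ref{prelocalization}, controls $|\langle T_j, \varphi\rangle|$ for any $\varphi \in \cc^\infty_0(Q_0)$, with $Q_0$ a fixed cube containing $E_1$, by a constant depending only on $Q_0$ and $n$ times $\max_{|s|=n-1} \|\partial^s \varphi\|_{L^1(Q_0)}$. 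This is a uniform bound on a separable space of test functions, so Banach--Alaoglu extracts a subsequence (still denoted $T_j$) with $T_j \to T$ weakly-$*$, where $T$ is a compactly supported distribution of order at most $n-1$. A further subsequence gives $(x_i/|x|^2) * T_j \to g_i$ weakly-$*$ in $L^\infty(\Rn)$ with $\|g_i\|_\infty \le 1$ for each $i \ne k$.

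Next I verify that $T$ is admissible for $E$. For the support, if $\varphi \in \cc^\infty_0(\Rn\setminus E)$ then $\operatorname{spt}\varphi$ is compact and disjoint from $\bigcap_j E_j$, and the finite intersection property applied to the decreasing compacta $E_j \cap \operatorname{spt}\varphi$ forces this intersection to be empty from some $j$ onward, so $\langle T_j,\varphi\rangle = 0$ eventually and thus $\langle T,\varphi\rangle = 0$. The growth bound $G(T) \le 1$ is inherited directly by testing the weak-$*$ convergence $T_j \to T$ against the normalized $\varphi_Q$ in the definition of $G$. Finally, with $\chi \in \cc^\infty_0(\Rn)$ equal to $1$ on a neighborhood of $E_1$, one has $\langle T,1\rangle = \langle T,\chi\rangle = \lim_j \langle T_j,\chi\rangle = \lim_j \langle T_j,1\rangle \ge L$.

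The step I expect to be most delicate is the identification $g_i = (x_i/|x|^2) * T$, which is what yields $\|(x_i/|x|^2) * T\|_\infty \le 1$. For $\psi \in \cc^\infty_0(\Rn)$ I would write
$$
\langle (x_i/|x|^2) * T_j, \psi\rangle = \langle T_j,\, \tilde k_i * \psi\rangle, \qquad \tilde k_i(x) = -x_i/|x|^2,
$$
and exploit the fact that $x_i/|x|^2$ is locally integrable on $\Rn$ for $n \ge 2$, so that $\tilde k_i * \psi$ is a $\mathcal C^\infty$ function on $\Rn$ (differentiating onto $\psi$) with $O(|x|^{-1})$ decay at infinity. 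After multiplication by a cutoff that equals $1$ on a neighborhood of $E_1$ and is supported in $Q_0$, one obtains a legitimate test function on which the weak-$*$ convergence of $T_j \to T$ applies, giving $\langle g_i,\psi\rangle = \langle (x_i/|x|^2)*T,\psi\rangle$ for all such $\psi$, hence the sought identity. Combined with the previous paragraph, $T$ is admissible for $E$ with $\langle T,1\rangle \ge L$, so $\Gamma_{\hat k}(E) \ge L$ and the proof is complete.
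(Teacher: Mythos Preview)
Your argument is correct, but it proceeds along a different line than the paper's. The key divergence is in the compactness step: you exploit the growth hypothesis $G(T_j)\le 1$ (built into the definition of $\Gamma_{\hat k}$) to bound $|\langle T_j,\varphi\rangle|$ uniformly by $\max_{|s|=n-1}\|\partial^s\varphi\|_{L^1}$ and then extract a weak-$*$ limit $T$ of the distributions themselves, identifying the potentials afterwards by testing against $\eta\,(\tilde k_i*\psi)$. The paper instead \emph{avoids the growth condition entirely}: it fixes one index $i\neq k$, passes to a weak-$*$ limit $f$ of the bounded potentials $f_j=(x_i/|x|^2)*T_j$, and reconstructs the limiting distribution from $f$ via the explicit formula
\[
\langle T,\varphi\rangle \;=\; c\int_{-\infty}^{0}\Delta(\overline{\varphi}*f)(t,0)\,dt,
\]
obtained from $c\,x_i/|x|^2=\partial_i E$ with $E$ the fundamental solution of the Laplacian; the identity $(x_i/|x|^2)*T=f$ is then checked by a mollification argument.

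What each approach buys: yours is shorter and conceptually cleaner for $\Gamma_{\hat k}$, but it genuinely needs $T_j\in g(E_j)$. The paper's route is more delicate but proves strictly more --- as the paragraph preceding the lemma points out, the same proof gives exterior regularity for the capacities $\kappa_i$, where no growth assumption is available and your compactness step would not go through. A minor remark: the invocation of Maz'ya is unnecessary in your bound --- the estimate $|\langle T_j,\varphi\rangle|\le \max_{|s|=n-1}\|\partial^s\varphi\|_{L^1(Q_0)}$ follows directly from rescaling $\varphi$ to meet the normalization in the definition of $G$.
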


\begin{proof}
Since, by definition,  the set function $\Gamma_{\hat
k}$ in non-decreasing
$$
\lim_{j\to\infty}\Gamma_{\hat k}(E_j)\geq\Gamma_{\hat k}(E),
$$
and the limit clearly exists. For each $j\geq 1$, let $T_j$ be a
distribution such that the potentials $x_i/|x|^2*T_j$ are in the
unit ball of $L^\infty(\Rn)$, $i \neq k$, and
$$
\Gamma_{\hat k}(E_j)-\frac 1 j < |\langle T_j,1\rangle | \leq\Gamma_{\hat k}(E_j).
$$
We want to show that for each test function $\varphi$,
\begin{equation}\label{test}
\langle T_j,\varphi\rangle \underset{j\to\infty}{\longrightarrow}\langle T,\varphi\rangle,
\end{equation}
for some distribution $T$ whose potentials $x_i/|x|^2*T$ are in
the unit ball of $L^\infty(\Rn)$ for $i\neq k$. If \eqref{test}
holds and $\varphi$ is a test function satisfying $\varphi\equiv
1$ in a neighbourhood of $E$, then
$$
\lim_{j\to\infty}\Gamma_{\hat k}(E_j)=\lim_{j\to\infty}|\langle T_j,1\rangle |=\lim_{j\to\infty}|\langle T_j,\varphi\rangle |= |\langle T,\varphi\rangle |\leq\Gamma_{\hat k}(E).
$$

To show \eqref{test}, fix $i \neq k$ and assume, without loss of
generality, that $i=1$. Set $k^1(x)=x_1/|x|^2$ and $f_j=k^1*T_j$.
Write a point $x \in \Rn$ as $x=(x_1,x_2)$, with $x_1\in\mathbb{R}$
and $x_2\in\mathbb{R}^{n-1}$. Finally notice that $c\,k^1 =
\partial_1 E$ where $E=log|x|$ and $c$ is a constant. Moreover, for each test function $\varphi$ one has

\begin{equation}\label{klaplacian}\varphi=c\Delta^{\frac n 2}\varphi*E,\end{equation}
for some constant $c$. For $n=2k$, identity (\ref{klaplacian}) says that $E$ is the fundamental solution of the
$k$-Laplacian in $\Rn$, and for $n=2k+1$, (\ref{klaplacian}) means that
\begin{equation}\label{oddcase}\varphi=c\Delta^{k+1}\varphi*\frac 1{|x|^{n-1}}*E.\end{equation}
We will only deal with the even case $n=2k$, since, by using the reproduction formula (\ref{oddcase}), the arguments for the odd case turn to be very similar.
Therefore, for each test
function $\varphi$,
$$
(T_j*\varphi)(x_1,x_2)=\int_{-\infty}^{x_1}\partial_1(T_j*\varphi)(t,x_2)\,dt=
c \int_{-\infty}^{x_1}\Delta^k(\varphi*f_j)(t,x_2)\,dt.
$$

Setting $\overline\varphi(x)=\varphi(-x)$ we get
\begin{equation}\label{firstcoordinate}
\langle T_j,\varphi\rangle =(T_j*\overline\varphi)(0,0)=c\int_{-\infty}^0\Delta^k(\overline\varphi*f_j)(t,0)\,dt.
\end{equation}
We remark, incidentally, that the above formula tells us how to
recover a distribution from one of its scalar Riesz potentials.

Passing to a subsequence, we can assume that $f_j\longrightarrow
f$ in the weak $*$ topology of $L^\infty(\Rn)$. But then
$(f_j*\Delta^k \varphi) (x) \longrightarrow (f*\Delta^k \varphi)
(x),\quad x \in \Rn$. This pointwise convergence is bounded
because $|(f_j*\Delta^k \varphi)
(x)|\leq\|\Delta^k\varphi\|_1\|f_j\|_{\infty} \le
\|\Delta^k\varphi\|_1$. Hence the dominated convergence theorem
yields
$$
\lim_{j\to\infty}\langle T_j,\varphi\rangle=c\lim_{j\to\infty}\int_{-\infty}^0\Delta^k(\overline\varphi*f_j)(t,0)\,dt =
c\int_{-\infty}^0\Delta^k(\overline\varphi*f)(t,0)\,dt.
$$

Define the distribution $T$ by
$$
\langle T,\varphi\rangle =c\int_{-\infty}^0\Delta^k(\overline\varphi*f)(t,0)\,dt.
$$

Now we want to show that $f=k^1*T$. For that we regularize $f_j$
and $T_j$. Take $\chi \in{\mathcal C}_0^{\infty}(B(0,1))$ with $\int
\chi(x)\,dx=1$ and set $\chi_\ep(x)=\ep^{-n}\chi(x/\ep)$. Then
we have, as $j \rightarrow \infty$,
$$
\left(\chi_\ep*k^1*T_j\right)(x)= \left(\chi_\ep*f_j\right)(x)
\longrightarrow \left(\chi_\ep*f\right)(x),\quad x \in \Rn,
$$
because $f_j$ converges to $f$ weak $*$ in $L^\infty(\Rn)$. On
the other hand, since $\chi_\ep *k_1 \in \cc^\infty(\Rn)$ and
$T_j$ tends to $T$ in the weak topology of distributions, with
controlled supports, we have
$$
\left(\chi_\ep*k^1*T_j\right)(x) \longrightarrow
\left(\chi_\ep*k^1*T\right)(x), \quad x \in \Rn.
$$
Hence
$$
\chi_\ep*k^1*T = \chi_\ep*f, \quad \ep > 0,
$$
and so, letting $\ep \rightarrow 0$, $k^1*T = f$.
\end{proof}

\subsection{End of the proof of the inequality \boldmath$\Gamma_{\hat k} \leq C\,\Gamma_{\hat k,\op}$}

We claim that the inequality in the title of this subsection  can be
proved by adapting the scheme of the proof of Theorems~1.1 in~\cite{semiad} and 7.1 in~\cite{semiad2}.  As Lemma \ref{extreg}
shows, the capacities $\Gamma_{\hat k}$, $1\leq k\leq n$, enjoy
the exterior regularity property. This is also true for the
capacities $\Gamma_{\hat k,+}$, $1\leq k\leq n$, defined by
$$
\Gamma_{\hat k,+}(E)=\sup\left\{\mu(E):\mu\in L(E),\,
\left\|\frac{x_j}{|x|^2}*\mu\right\|_\infty\leq 1,\,1\leq j \leq n,\, j\neq k\right\},
$$
just by the weak $\star$ compactness of the set of positive
measures with total variation not exceeding $1$.  
We can approximate a general compact set $E$ by sets which are
finite unions of cubes of the same side length in such a way that
the capacities $\Gamma_{\hat k}$ and $\Gamma_{\hat k,+}$ of the
approximating sets are as close as we wish to those of $E$.
As in \eqref{GammaopGamma+}, one has, using the Davie-Oksendal
Lemma for several operators \cite[Lemma 4.2]{mattilaparamonov},
\begin{equation}\label{comppositiu}
C^{-1}\, \Gamma_{\hat k,\op}(E) \le \Gamma_{\hat k,+}(E) \le C\,
\Gamma_{\hat k,\op}(E).
\end{equation}
Thus we can assume, without loss of generality,  that $E$ is a
finite union of cubes of the same size. This will allow to implement
an induction argument on the size of certain (n-dimensional)
rectangles. The first step involves rectangles of diameter
comparable to the side length of the cubes whose union is $E$.

The starting point of the general inductive step in the proof of
Tolsa's Theorem in~\cite{semiad} (and \cite{semiad2}) consists in
the construction of a positive Radon measure $\mu$ supported on a
compact set $F$ which approximates $E$ in an appropriate sense.  The
construction of $F$ and $\mu$ gives readily that $\Gamma_{\hat k}(E)
\le C\, \mu(F),$  and $ \Gamma_{\hat k,+}(F) \le C\,
\Gamma_{\hat k,+}(E)$, which tells us that $F$ is not too small
but also not too big.
 However, one cannot expect, in the context of \cite{semiad} and
\cite{semiad2}, the Cauchy singular integral to be
 bounded on~$L^2(\mu)$. In our case one cannot expect the
operators $R^j$ to be bounded on~$L^2(\mu)$, for $1\leq j\leq n$,
$j\neq k$. Here $R^j$ is the operator associated with the scalar
Riesz kernel $(x_j-y_j) /|x-y|^2$.One has to carefully look for a
compact subset $G$ of $F$ such that $\mu(F) \le C\,\mu(G)$, the
restriction $\mu_G$ of $\mu$ to $G$ has linear growth and the
operators $R^j$, $1\leq j\leq n$, $j\neq k$\,, are bounded on
$L^2(\mu_G)$ with dimensional constants. This completes the proof
because then
\begin{equation*}
\begin{split}
\Gamma_{\hat k}(E) &\le C\, \mu(F) \le C\, \mu(G) \le
C\,\Gamma_{\hat k,\op}(G)  \le C\,\Gamma_{\hat k,\op}(F) \\*[5pt]
& \le C\,\Gamma_{\hat k,+}(F) \le C\, \Gamma_{\hat k,+}(E) \le
C\,\Gamma_{\hat k,\op}(E) .
\end{split}
\end{equation*}

In \cite{semiad} and \cite{semiad2} the set $F$ is defined as the
union of a special family of cubes $\{Q_i\}_{i=1}^N$ that cover the
set $E$ and approximate $E$ at an appropriate intermediate scale.
One then sets
$$F=\bigcup_{i=1}^NQ_i.$$
This part of the proof extends without any obstruction to our case
because of  the positivity properties of the symmetrization of the
scalar Riesz kernels (see section~3). As in Lemma~7.2 in~\cite{semiad2}, just by how the approximating set~$F$ is
constructed, one gets $\Gamma_{\hat k,+}(F)\leq C\,\Gamma_{\hat
k,+}(E)$.  By the definition of $\Gamma_{\hat k}(E)$ it follows that
there exists a real distribution $T_0$ supported on $E$ such that
\begin{enumerate}
 \item ${\Gamma_{\hat k}(E) \le 2 |\langle T_0,1\rangle |.}$
 \item $T_0$ has linear growth and $G(T_0) \le 1$.
 \item  $\displaystyle{\|\frac{x_j}{|x|^2}* T_0\|_\infty\leq 1,}$ \quad $1\leq j\leq n$,\quad $j\neq  k$.
\end{enumerate}

Consider now functions $\varphi_i\in{\mathcal C}_0^{\infty}(2Q_i)$,
$0\leq\varphi_i\leq 1$, $\|\partial^s \varphi_i\|_\infty\leq
C\,l(Q_i)^{-|s|}$, $0 \leq |s|\leq  n-1$ and
$\sum_{i=1}^N\varphi_i=1$ on $\bigcup_iQ_i$. We define now
simultaneously the measure~$\mu$ and an auxiliary measure~$\nu$,
which should be viewed as a model for~$T_0$ adapted to the family of
cubes $\{Q_i\}_{i=1}^N$.  For each cube~$Q_i$ take a concentric
segment~$\Sigma_i$ of length a small fixed fraction of $\Gamma_{\hat
k}(E \cap Q_i)$ and set
$$
\mu=\sum_{i=1}^N\Hu_{|\Sigma_i}
$$
and
$$
\nu=\sum_{i=1}^N\frac{\langle T_0,\varphi_i\rangle }{\Hu(\Sigma_i)}\Hu_{|\Sigma_i}.
$$
We have $d\nu=bd\mu$, with
$\displaystyle{b=\frac{\langle \varphi_i,\nu_0\rangle}{\Hu(\Sigma_i)}}$ on
$\Sigma_i$. At this point we need to show that our function~$b$ is
bounded, to apply later a suitable $T(b)$ Theorem. To estimate
$\|b\|_{\infty}$ we use the localization inequalities
$$
\left\|\frac{x_j}{|x|^2}*\varphi_i T_0\right\|_\infty\leq C, \quad 1\leq j\leq n, \quad j\neq k, \quad 1\leq i\leq N .
$$
This was proved in Lemma \ref{localization1} of Section \ref{local}.
Since it is easily seen that $\varphi_i T_0$ has linear growth and
$G(\varphi_i T_0) \le C$, we obtain, by the definition of
$\Gamma_{\hat k}$,
\begin{equation}\label{capita}
 |\langle T_0 , \varphi_i\rangle |\leq C\,\Gamma_{\hat k}(2Q_i\cap E),\quad\text{for } 1\leq i\leq N.
\end{equation}
It is now easy to see why $\Gamma_{\hat k}(E) \le C \,\mu(F) $:
\begin{equation}
\begin{split}
\Gamma_{\hat k}(E) &\le 2\,|\langle T_0,1\rangle | = 2\left|\sum_{i=1}^N \langle T_0 ,
\varphi_i\rangle \right| \\*[7pt]
& \le C \sum_{i=1}^N  \Gamma_{\hat k}(2Q_i\cap E) =
C \, \mu(F).
\end{split}
\end{equation}

We do not insist in summarizing the intricate details, which can be
found in \cite{semiad} and \cite{semiad2}, of the definition of the
set $G$ and of the application of the $T(b)$ Theorem of \cite{ntv}.

\section{Counter-examples to the growth estimate}
As we explained in the introduction, if $T$ is a compactly
supported distribution such that $x/|x|^{2}*T$ is bounded,  then
$T$ satisfies the linear growth condition \eqref{creixement} (see
\eqref{growthRiesz} and \eqref{growthRiesz1}). This is no longer
true under the assumption that $n-1$ components of $x/|x|^{2}*T$
are bounded, as the following examples show.

\begin{prop}
There exist a compactly supported real Radon measure $\mu$  in $\mathbb{R}^n$, such that
for $1\le i\le n-1$, $x_i/|x|^2*\mu$ is in $L^\infty(\mathbb{R}^n)$ and $G(\mu) = \infty$.
\end{prop}

\begin{proof}
The idea of the proof is that there is no relation, in general,
between the derivatives of a function with respect to different
variables. The technical details of the proof differ according to
the parity of the dimension, so we deal separately with even and
odd dimensions. Indeed, we work in $\mathbb{R}^3$ and
$\mathbb{R}^4$, the general case being a straightforward extension
of these two.

\begin{enumerate}

 \item {\em The odd case:}

Set $x=(x_1,x_2,x_3)\in \mathbb{R}^3$ and let
$h(x)=f(x_1)f(x_2)g(x_3)$, where $f$ is the compactly supported
infinitely differentiable function defined by

\begin{equation}\label{f}
f(t)=
\begin{cases}
1&\text{if } t\in [0,1]\\*[5pt]
0&\text{if } t\in [-1,2]^c
\end{cases}.
\end{equation}

\noindent To define $g$, let $\psi$ be an infinitely
differentiable function supported on $[1/2,1]$, increasing for
$x\in[1/2,3/4]$, decreasing for $x\in[3/4,1]$ and such that
$\psi(3/4)=1$. Define $g$ on $I_j=[2^{-j-1},2^{-j}]$, $j\geq 0$,
by
\begin{equation}\label{g}
g(t)=\frac {\psi(2^j t)}{(j+1)^3},\,\,t\in I_j.
\end{equation}

\noindent Set $\mu=\partial_3^3h = f(x_1)f(x_2)g^{3)}(x_3),$ and
write $k^i(x)=x_i/|x|^{2}$, $1\le i\le 3$, so
 that, for $i=1,2$,

$$(\mu*k^i)(x)=(\partial_3^3h*k^i)(x)=(\partial_i\partial_3 h*\partial_3 k^3)(x).$$\\

\noindent We claim that  $\|k^i*\mu\|_\infty\le C, \;\; 1\le i\le
2$. Since for $m\geq 0$,
\begin{equation}\label{gk}
g^{m)}(t)=\frac{2^{mj}}{(j+1)^3}\psi^{m)}(2^j t),\,\,\,t\in I_j,
\end{equation}
we have,

\begin{equation*}
\begin{split}
|(\mu*k^i)(x)|&\le C\int_0^1\int_{-1}^2\int_{-1}^2\frac{ |f'(y_1)||f(y_2)||g'(y_3)|}{|y-x|^2}dy_1dy_2dy_3\\
&\le C\sum_{j}\frac{2^j}{(j+1)^3}\int_{-1}^2\int_{-1}^2\int_{I_j}\frac{dy_3dy_1dy_2}{|y-x|^2}\\
&\le C\sum_j\frac 1{(j+1)^2}\le C.
\end{split}
\end{equation*}
The next to the last inequality follows by decomposing the domain
of integration into "annuli" $\left(|y_1-x_1|^2+|y_2-x_2|^2
\right)^{1/2} \simeq 2^k\,2^{-j}, \;\;|y_3-x_3| \simeq 2^{-j},
\;\; 0 \le k$.

Hence $\|k^i*\mu\|_\infty\le C$ for $i=1,2$.

To see that the linear growth condition fails for the measure
$\mu$, take an interval $I^*_j \subset I_j $ such that, for some
fixed small positive number $\delta$, one has  $l(I^*_j) \geq
\delta \, l(I_j )$ and $g^{3)}(t) \geq \delta\,2^{3j}/(1+j)^3,\;\;
t \in I^*_j$. The existence of such $\delta$ and $I^*_j$ follows
readily from the definition of $g$ on $I_j.$ Take a non-negative
function $\phi \in {\cal C}^\infty_0(I^*_j)$ with $\phi(t) = 1 $
on $I^*_j/2$ (interval with the same center of $I^*_j$ and half
the length). Let $Q_j$ be the cube $(I^*_j)^3$, $j \ge 0$ and set
$\varphi_{Q_j}(x_1,x_2,x_3)=\phi(x_1)\phi(x_2)\phi(x_3)$. Then
$\varphi_{Q_j} \in {\cal C}_0^\infty(Q_j)$ and $\varphi_{Q_j}/ C$
 satisfies the normalization condition
\eqref{normalization} for some absolute big constant $C$. Then,
since $l(Q_j)=l(I^*_j)\approx l(I_j)=2^{-j}$, by \eqref{gk} for
$m=3$ we obtain,

\begin{equation*}
\begin{split}
\langle \mu,\varphi_{Q_j}\rangle\,&=\left(\int_{
I^*_j}\phi(t)dt\right)^2\,\int_{ I^*_j}\phi(t)g^{3)}(t)dt\approx
l(Q_j)^2 \frac{2^{3j}}{(j+1)^3} l(Q_j)=\frac{2^j}{(j+1)^3} l(Q_j).
\end{split}
\end{equation*}

\noindent Thus

$$
\frac{|\langle \mu,\varphi_{Q_j}\rangle|}{l(Q_j)}\underset{j\to\infty}{\longrightarrow}\infty,
$$
which implies $G(\mu)=\infty$.

\item {\em The even case:}

For $x=(x_1,x_2,x_3,x_4)\in \mathbb{R}^4$ let
$h(x)=f(x_1)f(x_2)f(x_3)g(x_4)$, where $f$ is the function defined
by \eqref{f} and $g$ is defined by
\begin{equation*}
g(t)= \psi(2^j t),\,\,t\in I_j,
\end{equation*}
that is, as in \eqref{g} except that the denominator $(j+1)^3$ is
not needed in this case.

Define $\mu=\Delta^2 h\,,$ the bilaplacian
of $h$. Then
\begin{equation*}
\begin{split}
\mu&=g(x_4)\left(f^{4)}(x_1)f(x_2)f(x_3)+f(x_1)f^{4)}(x_2)f(x_3)+f(x_1)f(x_2)f^{4)}(x_3)\right)\\*[7pt]&+2g(x_4)\left(f''(x_1)f''(x_2)f(x_3)+f''(x_1)f(x_2)f''(x_3)+f(x_1)f''(x_2)f''(x_3)\right)\\*[7pt]&+2g''(x_4)\left(f''(x_1)f(x_2)f(x_3)+f(x_1)f''(x_2)f(x_3)+f(x_1)f(x_2)f''(x_3)\right)\\*[7pt]&+
g^{4)}(x_4)f(x_1)f(x_2)f(x_3).
\end{split}
\end{equation*}

\noindent Write $k^i(x)=x_i/|x|^{2}$, $1\le i\le 4$. Notice that
$k^i(x)=c\,\partial_i E$, where $E$ is the fundamental solution of the bilaplacian and $c$ a constant. Then,
for $1\le i\le 3$,
$$
\|k^i*\mu\|_\infty=\|k^i*\Delta^2
h\|_\infty=\|c\,\partial_i(\Delta^2 h*E)\|_\infty=c\|\partial_i
h\|_\infty=c\|f\|_\infty^2 \|f'\|_\infty \|g\|_\infty\leq C.
$$
Although this is not necessary for the argument, notice that, by
\eqref{gk}, we have
$$
\|k^4*\mu\|_\infty=\|k^4*\Delta^2 h\|_\infty=\|c\,\partial_4(\Delta^2 h*E)
\|_\infty=c\|\partial_4 h\|_\infty=c\|f\|^3_\infty\|g'\|_\infty=\infty.
$$
Take an interval $I^*_j \subset I_j $ such that, for some fixed
small positive number $\delta$, one has  $l(I^*_j) \geq \delta \,
l(I_j )$ and $g^{4)}(t) \geq \delta\,2^{4j},\;\; t \in I^*_j$. The
existence of such $\delta$ and $I^*_j$ follows readily from the
definition of $g$ on $I_j.$ Take a non-negative function $\phi \in
{\cal C}^\infty_0(I^*_j)$ with $\phi(t) = 1 $ on $I^*_j/2$ (interval
with the same center of $I^*_j$ and half the length). Let $Q_j$ be
the cube $(I^*_j)^4$, $j \ge 0$ and set
$\varphi_{Q_j}(x_1,x_2,x_3,x_4)=\phi(x_1)\phi(x_2)\phi(x_3)\phi(x_4)$.
Then $\varphi_{Q_j} \in {\cal C}_0^\infty(Q_j)$ and $\varphi_{Q_j}/
C$
 satisfies the normalization condition
\eqref{normalization} for some absolute constant $C$. Then, since
$f''$ and $f^{4)}$ are zero on $I^*_j$ and $l(Q_j)= l(I^*_j)
\approx l(I_j)=2^{-j}$, by \eqref{gk} for $m=4$ we obtain,

\begin{equation*}
\begin{split}
\langle
\mu,\varphi_{Q_j}\rangle\,&=\left(\int_{I^*_j}\phi(t)f(t)dt\right)^3\,\int_{
I^*_j}\phi(t)g^{4)}(t)dt\\\\&\approx l(Q_j)^3\, 2^{4j}\,
l(Q_j)\approx 2^j \,l(Q_j).
\end{split}
\end{equation*}

\noindent Thus

$$
\frac{|\langle \mu,\varphi_{Q_j}\rangle|}{l(Q_j)}\underset{j\to\infty}{\longrightarrow}\infty,
$$
which implies $G(\mu)=\infty$.
\end{enumerate}
\end{proof}

On the plane, we do also have a counterexample in the setting of positive
measures, based on a completely different idea.

\begin{prop}
There exists a positive Radon measure $\mu$ such that
$x_1/|x|^2*\mu$ is in $L^\infty(\mathbb{R}^2)$ and $G(\mu) =
\infty$.
\end{prop}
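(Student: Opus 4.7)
The starting point is the identity $k^1=\partial_{x_1}\log|x|$, so boundedness of $k^1*\mu$ in $L^\infty$ is equivalent to the logarithmic potential $U=\log|x|*\mu$ being Lipschitz in the first variable, while $\mu=\frac{1}{2\pi}\Delta U$ is a positive Radon measure. The plan is to produce a subharmonic $U$ on $\mathbb{R}^2$ that grows like $M\log|x|$ at infinity (so $\mu$ has finite mass $M$), is Lipschitz in $x_1$, but whose Laplacian concentrates mass at an accumulation point at a rate strictly exceeding linear in the radius.

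Since a positive measure affords no cancellation between positive and negative parts, the construction of Proposition~5.1 cannot be mimicked directly; instead one exploits the odd symmetry of $k^1(x)=x_1/|x|^2$ in the first coordinate. I would take $\mu=\sum_n \mu_n$, where each $\mu_n$ is a positive measure concentrated near a fixed accumulation point, placed symmetrically about the $x_2$-axis so that contributions to $k^1*\mu$ from the two sides partially cancel at every test point. A natural family of building blocks is given by pairs of small vertical segments at positions $x_1=\pm a_n$ with $a_n\downarrow 0$, equipped with the corresponding Hausdorff $1$-measure of mass $m_n$; on such a segment the potential $k^1*\mu_n$ is the explicit difference of two arctangents, hence bounded by $C\,m_n$. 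The remaining freedom is in the sequences $(a_n)$ and $(m_n)$.

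The verification then has three steps: (i) derive the explicit arctangent formula for each $k^1*\mu_n$ and its pointwise bound; (ii) use the odd symmetry of $k^1$, together with the symmetric placement, to show that the signed contributions at any test point telescope, keeping $\|k^1*\mu\|_\infty$ finite for the chosen parameters; (iii) count the mass of $\mu$ inside a small ball $B(0,r)$ about the accumulation point, and show that, because an increasing number of building blocks enters $B(0,r)$ as $r\to 0$, the ratio $\mu(B(0,r))/r$ tends to infinity for a suitable tuning of $(a_n)$ and $(m_n)$.

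The principal obstacle is precisely the compatibility of (ii) and (iii): boundedness of $k^1*\mu$ and failure of linear growth are in tension, since any concentration of positive mass tends to produce an unbounded scalar Riesz potential, and most naive parameter choices force the bound on $\|k^1*\mu\|_\infty$ and the bound on $G(\mu)$ to scale together. Overcoming this requires a delicate balance that exploits the dipole-like structure of $k^1$ in the symmetric configuration, and thus ultimately embodies the fundamental asymmetry between the scalar kernel $x_1/|x|^2$ and the full vector-valued Riesz kernel $x/|x|^2$ discussed throughout the paper.
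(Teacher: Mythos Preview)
Your approach differs from the paper's, and step~(ii) contains a genuine gap. The paper uses no two-dimensional dipole geometry: it takes $\mu=f(t)\,dt$ supported on the $x_1$-axis with $f(t)=\log^{+}(1/|t|)$, notes that $(k^1*\mu)(x_1,x_2)$ is then the conjugate Poisson integral $Q_{x_2}f(x_1)=P_{x_2}(Hf)(x_1)$, and checks by a direct one-variable computation that $Hf\in L^\infty(\mathbb{R})$ even though $f\notin L^\infty(\mathbb{R})$. Failure of linear growth is immediate from the unboundedness of $f$. The whole proof is thus a Hilbert-transform calculation, with the mass placed on a line \emph{parallel} to the distinguished direction.

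Your vertical-segment construction cannot satisfy (ii) and (iii) simultaneously, and the obstruction is explicit. The odd symmetry of $k^1$ in $x_1$ makes $k^1*\mu$ vanish only on $\{x_1=0\}$; off that axis there is no cancellation between the two halves of a pair. Evaluate at $(x_1,0)$ with $x_1>0$: a vertical segment $\{a\}\times[-\ell/2,\ell/2]$ with density $\rho$ contributes $2\rho\arctan\bigl(\ell/(2(x_1-a))\bigr)$, and for every $n$ with $a_n<x_1$ the contributions from the segments at $+a_n$ and at $-a_n$ are \emph{both positive}. Using $\arctan t\ge (\pi/4)\,t$ on $[0,1]$ and summing over those $n$ with $a_n<x_1/2$ (and $\ell_n\le a_n$, say) gives
\[
(k^1*\mu)(x_1,0)\ \ge\ \frac{c}{x_1}\sum_{a_n<x_1/2}\rho_n\ell_n\ \ge\ \frac{c'}{x_1}\,\mu\bigl(B(0,x_1/2)\bigr).
\]
Hence boundedness of $k^1*\mu$ along the $x_1$-axis already forces $\mu(B(0,r))\le C\,r$, which is precisely linear growth. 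The claimed ``telescoping'' does not occur: for a positive measure the paired contributions add rather than cancel at points with $|x_1|>a_n$, so the tension you name in your final paragraph is not a tunable difficulty but a hard obstruction for this choice of building block.
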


\begin{proof}
Consider the function $f(t)=\log^+\dfrac{1}{|t|}$, $t\in
\mathbb{R}$. Then  $ f \in \mathit{BMO}(\mathbb{R})\setminus
L^{\infty}(\mathbb{R})$ and $f$ is supported on the interval
$[-1,1]$.  If $y >0$, then
\begin{equation*}
\begin{split}
\left(\frac{i}{\pi z}*f\right)(x,y)&=\frac 1\pi(k^2*f)(x,y)+\frac
i\pi(k^1*f)(x,y)\\*[7pt]
&=\frac{1}{\pi}\int_{\mathbb{R}}\frac{y}{(x-t)^2+y^2}f(t)\,dt+\frac{i}{\pi}\int_{\mathbb{R}}\frac{x-t}{(x-t)^2+y^2}f(t)\,dt\\*[7pt]
&=(P_y f)(x)+i(Q_y f)(x),
\end{split}
\end{equation*}
where $P_yf(x)$ and $Q_yf(x)$ are the Poisson transform and the
conjugate Poisson transform of $f$ respectively.

\vspace*{7pt}

\noindent Therefore, if $\displaystyle{Hf=\frac 1\pi \text{p.v.}
\frac 1x*f}$ is the Hilbert transform of $f$,
$$
(k^1*fdt)(x,y)=(Q_y f)(x)=P_y(Hf)(x).
$$
We claim that
\begin{equation}\label{hilbert}
H(f)\in
L^{\infty}(\mathbb{R}).
\end{equation}
If \eqref{hilbert} holds, then the positive measure
$\mu=f(t)\,dt$ satisfies
\begin{equation*}
|(k^1*\mu)(x,y)|=|P_y(Hf)(x)| \le \|Hf\|_\infty, \quad x \in
\mathbb{R}, \quad y > 0.
\end{equation*}
Since  $(k^1*\mu)(x,-y) = (k^1*\mu)(x,y)$, we get $k^1*\mu \in
L^\infty({\mathbb{R}}^2) $ and, on the other hand,  $\mu$ has not
linear growth, just because $f$ is unbounded.

To show \eqref{hilbert}, we first observe that integrating by
parts we have
$$
\text{p.v.}\int_{-1}^1\log\frac 1{|t|}\frac{dt}{x-t} =
\lim_{\epsilon \rightarrow 0} \int_{1 >|t|> \epsilon} \log
|x-t|\,\frac{dt}{t}.
$$
The function above is odd and so we can assume that $x$ is
positive. Making first the change of variables $\tau = -t$ and
then $u= t/x$ we get
\begin{equation*}
\begin{split}
\lim_{\epsilon \rightarrow 0} \int_{1 > |t|> \epsilon} \log
|x-t|\,\frac{dt}{t} & = - \lim_{\epsilon \rightarrow 0} \int_{1
> |t|> \epsilon} \log |x+t|\,\frac{dt}{t}\\*[4pt] &= \frac{1}{2}\,
\int_{-1}^1 \log \frac{|x-t|}{|x+t|}\,\frac{dt}{t}
\\*[4pt] &=
\frac{1}{2}\, \int_{-\frac{1}{x}}^{\frac{1}{x}} \log
\frac{|u-1|}{|u+1|}\,\frac{du}{u} .
\end{split}
\end{equation*}
Hence
$$
\left| \text{p.v.}\int_{-1}^1\log\frac 1{|t|}\frac{dt}{x-t}
\right|\le \frac{1}{2}\,\int_{-\infty}^{\infty} \left| \log
\frac{|u-1|}{|u+1|}\,\frac{1}{u} \right|\,du,
$$
which completes the proof because the last integral above is
finite.
\end{proof}

It is worth mentioning that we do not know whether there exists a
positive measure $\mu$ in $\mathbb{R}^n, \, n \geq3$, with the $n-1$
potentials $\mu * x_i/|x|^2$, $1 \le i \le n-1$, in
$L^\infty(\mathbb{R}^n)$,  but not having linear growth.

\section{Miscellaneous related results}

As we have seen in the previous sections, the fact that the Cauchy
kernel is complex is not as relevant as the fact that it is odd and
has homogeneity~$-1$. Indeed, in the plane, \eqref{mark} shows that
 one recovers the theory of analytic capacity by replacing the Cauchy kernel $1/z$ by any
  of the real kernels  $\operatorname{Re}(1/z)$ or $\operatorname{Im}(1/z)$. In $\Rn$, $n\ge 3$,
  the Theorem shows  that an analogue of \eqref{mark} holds in higher dimensions
  adding appropriate growth conditions on the admissible distributions.

A natural question is how one can extend this kind of results to the
higher dimensional real variable setting in which the kernel
$x/|x|^2$ is replaced by the vector valued Riesz kernel of
homogeneity $-\alpha$
$$
k_\alpha(x)=\frac x{|x|^{1+\alpha}},\quad x\in\Rn,\quad 0<\alpha<n,
$$
and the capacity associated with this kernel is defined by (see
\cite {laura1})
$$
\Gamma_\alpha(E)=\sup\left\{|\langle T,1\rangle|:\operatorname{spt}(T)\subset E,\,\left\|\frac{x}{|x|^{1+\alpha}}*T\right\|_\infty\leq 1\right\}.
$$
 The case $\alpha=n-1$, $n\geq 2$, is especially interesting,
because it gives Lipschitz harmonic capacity (see \eqref{LipCapAlt}).

Unfortunately, as we show in subsections 6.1 and 6.2 below, the most
obvious analogues of \eqref{mark} and the Theorem  fail in this
setting.

\subsection{Capacities associated with scalar \boldmath$\alpha$-Riesz potentials}
\label{counterexamplealfa}

Let $T$ be a compactly supported distribution in $\Rn$ and
$0<\alpha<n$.  As it was explained in the Introduction, the natural
notion of distribution $T$ of growth $\alpha$ should involve Hardy
spaces.  In our present case, one should replace the reproduction formula \eqref{rep2} by the following ones, depending on the nature of the parameter $\alpha$:

\begin{itemize}
\item $\alpha\in\Z$.   A standard Fourier transform computation shows that, for some constant $c_n$ and each test function $\varphi$, one has
$$\varphi=c_n\sum_{j=1}^n\frac{x_j}{|x|^{1+\alpha}}*\partial_j(\Delta^{(n-1-\alpha)/2})\varphi\equiv c_n\frac{x}{|x|^{1+\alpha}}*\nabla(\Delta^{(n-\alpha-1)/2})\varphi.$$
\item $\alpha\notin\Z$. A standard Fourier transform computation shows that, for some constant $d_n$ and each test function $\varphi$, one has
\begin{equation*}\begin{split}
\varphi=d_n\sum_{j=1}^n\frac{x_j}{|x|^{1+\alpha}}*\frac 1{|x|^{n-\{\alpha\}}}*\partial_j(\Delta^{(n-[\alpha]-1)/2})\varphi\\\equiv d_n\frac{x}{|x|^{1+\alpha}}*\frac 1{|x|^{n-\{\alpha\}}}*\nabla(\Delta^{(n-[\alpha]-1)/2})\varphi,\end{split}\end{equation*}
where $\alpha=[\alpha]+\{\alpha\}$, with $[\alpha]\in\Z$ and $\{\alpha\}\in (0,1)$.
\end{itemize}

Now we are able to define the notion of a compactly supported
distribution with growth $\alpha$, $0<\alpha<n$. We say that $T$ has
growth $\alpha$ provided
\begin{equation}\label{growthalfa}
G_\alpha(T) = \sup_{\varphi_Q} \frac{|\langle T,\varphi_Q\rangle|}{l(Q)^{\alpha}}
< \infty ,
\end{equation}
where the supremum is taken over all $\varphi_Q \in \cc^\infty_0(Q)$
satisfying the following normalization inequalities :

\begin{enumerate}

\item For $\alpha\in\Z$, we require \begin{equation}\label{integerh1}\|\partial^s\varphi_Q\|_{H^1(\Rn)}\le l(Q)^{\alpha},\;\;\;\;|s|=n-\alpha.\end{equation}


\item For $\alpha\notin\Z$, we  require \begin{equation}\label{alfah1}\|\partial^s\varphi_Q*\frac 1{|x|^{n-\{\alpha\}}}\|_{H^1(\Rn)}\le l(Q)^{\alpha},\;\;\;\;|s|=n-[\alpha].\end{equation}


\end{enumerate}

For positive Radon measures
$\mu$ in $\Rn$ the preceding notion of growth $\alpha$ is equivalent
to the usual one. In subsection 6.5
complete details on this fact are provided.

For a
compact set $E$ in $\Rn$ we define $g_{\alpha}(E)$ as the set of all
distributions $T$ supported on $E$ having growth $\alpha$ with
constant $G_\alpha(T)$ at most $1$.

For each coordinate $k$ set
$$
\Gamma_{\alpha,\,\hat k}(E)=\sup\{|\langle T,1\rangle|\},
$$
where the supremum is taken over those distributions $T\in
g_\alpha(E)$, such that the $j$-th component of the $\alpha$-Riesz
potential $x_j/|x|^{1+\alpha}*T$ is in the unit closed ball of~$L^\infty(\Rn)$, for $1 \le j \le n$, $j \neq k$.

The proof of Lemma 3.2 in \cite{laura1} tells us that if $k_\alpha *
T$ is in the unit ball $L^\infty(\Rn,\Rn)$, then the distribution
$T$ has $\alpha$-growth and $G_\alpha(T)\le C$. Hence
$\Gamma_\alpha(E)\leq C\,\Gamma_{\alpha,\, \hat k}(E)$. In this
section we prove the following
\begin{prop}\label{alphano}
Given $0<\alpha<1$, there exists a set $E\subset\Rn$ such that
$\Gamma_{\alpha}(E)=0$ and $\Gamma_{\alpha,\,\hat k}(E)>0$.
\end{prop}
Therefore
 $\Gamma_{\alpha}$ and $\Gamma_{\alpha,\, \hat k}$ are not comparable and thus the direct analogue of the Theorem
 fails in this setting.

We proceed now to symmetrize the scalar $\alpha$-Riesz kernels in
order to get a better understanding of the capacities
$\Gamma_{\alpha, \, \hat k}$ for $1\leq k\leq n$ and $0<\alpha<1$.

For $0<\alpha<n$ and $1\leq i\leq n$ the quantity
\begin{equation}\label{permuialfa}
\sum_{\sigma}\frac{x^i_{\sigma(2)}-x^i_{\sigma(1)}}{|x_{\sigma(2)}-x_{\sigma(1)}|^{1+\alpha}}\frac{x^i_{\sigma(3)}-x^i_{\sigma(1)}}{|x_{\sigma(3)}-x_{\sigma(1)}|^{1+\alpha}}
\end{equation}
where the sum is taken over the permutations of the set $\{1,2,3\}$,
is the analogue of the right hand side of~\eqref{permui} for the
$i$-th coordinate of the Riesz kernel $k_\alpha$. Notice that
\eqref{permuialfa} is exactly
$$
2\,p_{\alpha,\,i}(x_1,x_2,x_3),
$$
where $p_{\alpha,\,i}(x_1,x_2,x_3)$ is defined as the sum
in~\eqref{permuialfa} only taken on the three permutations~$(1,2,3),
(2,3,1)$ and $(3,1,2)$.

We will now show that given three distinct points $x_1, x_2,
x_3\in\Rn$, for $1\leq i\leq n$ and $0<\alpha\leq 1$, the quantity
$p_{\alpha,\,i}(x_1,x_2,x_3)$ is non-negative. We will use this to
study the $L^2$ boundedness of the scalar Riesz integral operator of
homogeneity $-\alpha$.

The relationship between the quantity $p_{\alpha,\,i}(x,y,z)$,
$0<\alpha\leq 1$, $1\leq i\leq n$, and the $L^2$ estimates of the
operator with kernel $k_\alpha^i=x_i/|x|^{1+\alpha}$ is as in
\eqref{l2perm}. That is, if $\mu$ is a positive finite Radon measure
in $\Rn$ with $\alpha$-growth, $\ep>0$ and we set
$$R_{\alpha,\, \ep}^i(\mu)(x)=\int_{|y-x|>\ep}k_\alpha^i(y-x)\,d\mu(y),
$$
then (see in \cite{mv} the argument for the Cauchy singular integral
operator)
\begin{equation}\label{mevescalar}
\left|\int|R_{\alpha,\, \ep}^i(\mu)(x)|^2\,d\mu(x)-\frac 1
3p_{\alpha,\,i,\,\ep}(\mu)\right|\leq C\|\mu\|,
\end{equation}
$C$ being a positive constant depending only on $n$ and $\alpha$,
and
$$
p_{\alpha,\,i,\,\ep}(\mu)=\underset{S_\ep}{\iiint}p_{\alpha,\,i}(x,y,z)\,d\mu(x)\,d\mu(y)\,d\mu(z),$$
with $$S_\ep=\{(x,y,z):|x-y|>\ep,|x-z|>\ep \text{ and
}|y-z|>\ep\}.$$

\vspace{.5cm}

\begin{lemma}\label{permutalfa}
 Let $0<\alpha<1$ and $x_1$, $x_2$, $x_3$ three different points in $\Rn$. For $1\leq i\leq n$ we have
\begin{equation}\label{projections}
\frac{(2-2^{\alpha})m^2}{L(x_1,x_2,x_3)^{2+2\alpha}}\leq
p_{\alpha,\,i}(x_1,x_2,x_3)\leq
\frac{3m^2}{L(x_1,x_2,x_3)^{2+2\alpha}},
\end{equation}
where
$m=\max(|x_2^i-x_1^i|,|x_3^i-x_2^i|,|x_3^i-x_1^i|)$ and
$L(x_1,x_2,x_3)$ is the length of the largest side of the triangle
determined by the three points $x_1$, $x_2$, $x_3$.

\vspace*{7pt}

\noindent Moreover, $p_{\alpha,\,i}(x_1,x_2,x_3)=0$ if and only if
the three points lie on a $(n-1)$\guio{hypersurface} perpendicular
to the $i$ axis, i.e. $x_1^i=x_2^i=x_3^i$.
\end{lemma}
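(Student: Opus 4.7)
The plan is to mirror the argument for Lemma~\ref{permutacions}: compute $p_{\alpha,i}$ explicitly in coordinates, then extract the two-sided bound from elementary inequalities exploiting the triangle inequality.

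First, setting $a = x_2-x_1$, $b=x_3-x_2$, $c=a+b=x_3-x_1$ and $A=a_i$, $B=b_i$, $C=c_i=A+B$, the same straightforward expansion that produced the formula for $p_i$ in Lemma~\ref{permutacions} yields the identity
\begin{equation*}
p_{\alpha,i}(x_1,x_2,x_3)\,|a|^{1+\alpha}|b|^{1+\alpha}|c|^{1+\alpha} = A^{2}|b|^{1+\alpha} + B^{2}|a|^{1+\alpha} + AB\bigl(|a|^{1+\alpha}+|b|^{1+\alpha}-|c|^{1+\alpha}\bigr).
\end{equation*}
Using the permutation symmetry of $p_{\alpha,i}$, I would relabel so that $x_2$ is the median of the three $i$-th coordinates; flipping the sign of the $i$-th axis if necessary, this forces $A,B\geq 0$, so $C=A+B=m$. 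A homogeneity argument reduces the inequalities to the normalized case $L=1$.

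For non-negativity the key input is the subadditivity $|c|^{q}\leq|a|^{q}+|b|^{q}$ for $q\in(0,1)$, which follows from the triangle inequality $|c|\leq|a|+|b|$ together with concavity of $t\mapsto t^{q}$. Applied with $q=(1+\alpha)/2$ and squared it gives $|c|^{1+\alpha}\leq|a|^{1+\alpha}+|b|^{1+\alpha}+2(|a||b|)^{(1+\alpha)/2}$. Plugging this into the identity above and completing the square produces
\begin{equation*}
p_{\alpha,i}\,|a|^{1+\alpha}|b|^{1+\alpha}|c|^{1+\alpha}\geq\bigl(A|b|^{(1+\alpha)/2}-B|a|^{(1+\alpha)/2}\bigr)^{2}\geq 0.
\end{equation*}
For the quantitative lower bound by $(2-2^{\alpha})m^{2}/L^{2+2\alpha}$, I would instead combine the trivial estimate $|c|^{1+\alpha}\leq L^{1+\alpha}$ with the power-mean bound $|a|^{1+\alpha}+|b|^{1+\alpha}\geq 2^{-\alpha}(|a|+|b|)^{1+\alpha}\geq 2^{-\alpha}L^{1+\alpha}$ (valid because $|a|+|b|\geq|c|$ and $t\mapsto t^{1+\alpha}$ is convex). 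This controls the possibly negative mixed term from below, and balancing against $A^{2}|b|^{1+\alpha}+B^{2}|a|^{1+\alpha}$ should yield the factor $(2-2^{\alpha})m^{2}$. The main obstacle I anticipate is pinning down the precise numerical constant $2-2^{\alpha}$: it will require a careful case analysis depending on which side equals $L$, and I expect the extremal configuration to be the collinear one with $|a|=|b|=L/2$ and $A=B$.

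The upper bound $p_{\alpha,i}\leq 3m^{2}/L^{2+2\alpha}$ is obtained directly from
\begin{equation*}
p_{\alpha,i}=\frac{AC}{|a|^{1+\alpha}|c|^{1+\alpha}}+\frac{BC}{|b|^{1+\alpha}|c|^{1+\alpha}}-\frac{AB}{|a|^{1+\alpha}|b|^{1+\alpha}}
\end{equation*}
by bounding each of the three products by $m^{2}/L^{2+2\alpha}$; this uses the combined estimates $|A|,|B|,|C|\leq m$ with the reverse bounds $|A|\leq|a|$, $|B|\leq|b|$, $|C|\leq|c|$ to absorb the denominators against a side of length at least $L/2$, which is guaranteed by $|a|+|b|\geq|c|=L$. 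Finally, the vanishing characterization is immediate from the two-sided estimate: if $p_{\alpha,i}(x_1,x_2,x_3)=0$ then the lower bound forces $m=0$, i.e.\ $x_1^{i}=x_2^{i}=x_3^{i}$; conversely, if the three points share the $i$-th coordinate then $A=B=C=0$ and the identity for $p_{\alpha,i}$ vanishes term by term.
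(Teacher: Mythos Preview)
Your identity for $p_{\alpha,i}$ and the deduction of the vanishing criterion from the two-sided estimate are correct and agree with the paper. Your argument for non-negativity---relabelling so that $A,B\geq 0$, then completing the square via the subadditivity $|c|^{(1+\alpha)/2}\leq|a|^{(1+\alpha)/2}+|b|^{(1+\alpha)/2}$---is actually cleaner than anything in the paper's proof, which does not isolate positivity as a separate step.

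For the quantitative lower bound with the specific constant $2-2^{\alpha}$, the paper takes a different route from the one you sketch. It normalizes by side lengths rather than by the $i$-th coordinates: without loss of generality $1=|a|\leq|b|\leq|a+b|=L$. It then splits on the sign of $a_1b_1$; the hard case $a_1b_1>0$ is reduced, via the triangle inequality $|a+b|\leq 1+|b|$, to minimizing the one-variable function
\[
f(t)=a_1^2t^{1+\alpha}+b_1^2+a_1b_1\bigl(1+t^{1+\alpha}-(1+t)^{1+\alpha}\bigr)
\]
over $t\geq 1$, with the critical point $t^*=\bigl((a_1/b_1+1)^{1/\alpha}-1\bigr)^{-1}$ computed explicitly and a further sub-split on whether $t^*\leq 1$ or $t^*>1$. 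Your power-mean estimate points in a compatible direction but, as you concede, does not close the gap; note also that your guessed extremal ($|a|=|b|=L/2$, $A=B$) gives $p_{\alpha,i}L^{2+2\alpha}/m^2=2^{1+\alpha}-2^{2\alpha}$, which is strictly larger than $2-2^{\alpha}$ for $0<\alpha<1$, so it is not the configuration that determines the constant.

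One further warning: your argument for the upper bound cannot succeed as written, because the inequality $p_{\alpha,i}\leq 3m^2/L^{2+2\alpha}$ is in fact false. In $\mathbb{R}^2$ with $i=1$, take $x_1=(0,0)$, $x_2=(\tfrac12,h)$, $x_3=(1,0)$; then $m=1$, $L=\sqrt{\tfrac14+h^2}$, and the identity gives $p_{\alpha,1}=L^{-(1+\alpha)}-\tfrac14 L^{-(2+2\alpha)}$, so that $p_{\alpha,1}L^{2+2\alpha}/m^2\sim L^{1+\alpha}\to\infty$ as $h\to\infty$. Your step ``absorb the denominators against a side of length at least $L/2$'' breaks exactly here: two of the three sides have length $L$ while the $i$-th coordinate differences they carry are only $1/2$. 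Fortunately this does not affect the application in the paper, which relies only on the positivity and the vanishing characterization.
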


\begin{proof} Without loss of generality fix $i=1$. Write $a=x_2-x_1$
and $b=x_3-x_2$, then $a+b=x_3-x_1$. A simple computation yields
\begin{equation}\label{alfa}
p_{\alpha,1}(x_1,x_2,x_3)=\frac{a_1^2|b|^{1+\alpha}+b_1^2|a|^{1+\alpha}+a_1b_1
\left(|b|^{1+\alpha}+|a|^{1+\alpha}-|a+b|^{1+\alpha}\right)}{|a|^{1+\alpha}|b|^{1+\alpha}|a+b|^{1+\alpha}},
\end{equation}
which makes the second inequality in~\eqref{projections} obvious. To
prove the first inequality in~\eqref{projections}, assume without
loss of generality, that $1=|a|\leq|b|\leq|a+b|$. Then
$$
p_{\alpha,1}(x_1,x_2,x_3)=\frac{1}{|b|^{1+\alpha}|a+b|^{1+\alpha}}\left(a_1^2|b|^{1+\alpha}+b_1^2+a_1b_1(1+|b|^{1+\alpha}-|a+b|^{1+\alpha})\right).
$$

We distinguish now two cases,
\begin{itemize}
 \item {\sl Case $a_1b_1\leq 0$.}
 Notice that since $|b|\leq|a+b|$,
 $$
 a_1b_1(1+|b|^{1+\alpha}-|a+b|^{1+\alpha})\geq a_1b_1.
 $$
 Then, since $|b|\geq 1$,
\begin{equation*}
\begin{split}
p_{\alpha,1}(x_1,x_2,x_3)&=\frac{1}{|b|^{1+\alpha}|a+b|^{1+\alpha}}\left(a_1^2|b|^{1+\alpha}\!+\!b_1^2\!+\!a_1b_1(1\!+\!|b|^{1+\alpha}\!-\!|a+b|^{1+\alpha})\right)\\*[7pt]
&\geq \frac{a_1^2|b|^{1+\alpha}+b_1^2+a_1b_1}{|b|^{1+\alpha}|a+b|^{1+\alpha}}\geq\frac{a_1^2+b_1^2+a_1b_1}{|b|^{1+\alpha}|a+b|^{1+\alpha}}\\*[7pt]
&=\frac 1 2\frac{(a_1+b_1)^2+a_1^2+b_1^2}{|b|^{1+\alpha}|a+b|^{1+\alpha}}.
\end{split}
\end{equation*}
\item {\sl Case $a_1b_1>0$.} Then $\max\{a_1^2,b_1^2,(a_1+b_1)^2\}=(a_1+b_1)^2$. Write $t=|b|\geq 1$ and
$$
f(t)=a_1^2t^{1+\alpha}+b_1^2+a_1b_1\left(1+t^{1+\alpha}-(1+t)^{1+\alpha}\right).
$$
By the triangle inequality,
$$
 p_{\alpha,1}(x_1,x_2,x_3)\geq \frac{f(t)}{|b|^{1+\alpha}|a+b|^{1+\alpha}}\geq\frac{\min_{t\geq 1}f(t)}{|b|^{1+\alpha}|a+b|^{1+\alpha}}.
$$

Our function $f$ has a minimum at the point
$t^*=\left(\left(\frac{a_1}{b_1}+1\right)^{1/\alpha}-1\right)^{-1}$.
\begin{enumerate}
\item If $a_1/b_1\geq 2^\alpha-1$, then $t^*\leq 1$. Therefore
\begin{equation*}
\begin{split}
p_{\alpha,1}(x_1,x_2,x_3)&\geq \frac{f(1)}{|b|^{1+\alpha}|a+b|^{1+\alpha}}\\*[7pt]
&=\frac{a_1^2+b_1^2+2a_1b_1(1-2^\alpha)}{|b|^{1+\alpha}|a+b|^{1+\alpha}}\\*[7pt]
&=(2^\alpha-1)\frac{(a_1-b_1)^2}{|b|^{1+\alpha}|a+b|^{1+\alpha}}+(2-2^{\alpha})\frac{a_1^2+b_1^2}{|b|^{1+\alpha}|a+b|^{1+\alpha}}\\*[7pt]
&\geq\frac{2-2^{\alpha}}{2}\frac{(a_1+b_1)^2}{|b|^{1+\alpha}|a+b|^{1+\alpha}}.
\end{split}
\end{equation*}

\item If $a_1/b_1< 2^\alpha-1$, then $t^*>1$. Hence,
$$
p_{\alpha,1}(x_1,x_2,x_3)\geq\frac{f(t^*)}{|b|^{1+\alpha}|a+b|^{1+\alpha}}.
$$

Since
$$f(t^*)=b_1^2\left(1+\frac{a_1}{b_1}\right)\left(1-\frac{a_1}{\left((a_1+b_1)^{1/\alpha}-b_1^{1/\alpha}\right)^\alpha}\right),$$
then
\begin{equation*}
\begin{split}
 f(t^*)&\geq b_1^2\min_{a_1<b_1(2^{\alpha}-1)}\left(1-\frac{a_1}{\left((a_1+b_1)^{1/\alpha}-b_1^{1/\alpha}\right)^\alpha}\right)\\*[7pt]
 &=b_1^2(2-2^{\alpha})\geq\frac{2-2^{\alpha}}{2^{2\alpha}}(a_1+b_1)^2,
\end{split}
\end{equation*}
since the function
$$
g(x)=1-\frac{x}{\left((x+b_1)^{1/\alpha}-b_1^{1/\alpha}\right)^{\alpha}}
$$
is decreasing and $(a_1+b_1)^2\leq (2^\alpha b_1)^2$.
\end{enumerate}
\end{itemize}
Now, If $x_1^1=x_2^1=x_3^1$, then $a_1=b_1=0$. Hence \eqref{alfa}
gives us $p_{\alpha,1}(x_1,x_2,x_3)=0$. On the other hand,
if $p_{\alpha,1}(x_1,x_2,x_3)=0$, inequality \eqref{projections}
gives us
$\max((x_2^i-x_1^i)^2,(x_3^i-x_2^i)^2,(x_3^i-x_1^i)^2)=0$, hence
$a_1^2=b_1^2=(a_1+b_1)^2=0$, which implies $x_1^1=x_2^1=x_3^1$.
\end{proof}

We are now ready to prove Proposition \ref{alphano}. Take a compact
subset $E$ of the $x_1$-axis with positive finite
$\alpha$-dimensional Hausdorff measure. Then by \cite [Theorem
1.1]{laura1}, $\Gamma_\alpha(E)=0$. It remains to show that
$\Gamma_{\alpha, \hat 1}(E)>0$. For this let $\mu$ be the
$\alpha$-dimensional Hausdorff measure restricted to $E$. Choosing
appropriately $E$ we can assume in addition that $\mu$ satisfies the
Ahlfors regularity condition $\mu(B(x,r)) \simeq r^\alpha$, \;\; $0
< r <  \text{diam}(E)$. In particular, $\mu$ has growth $\alpha$ and
is
 doubling.
It is enough to show that the singular integral operator
$R_\alpha^i$ associated with the scalar kernel
$k_\alpha^i=x_i/|x|^{1+\alpha}$, $i \neq 1$, is bounded on
$L^2(\mu)$.  This reduction is possible because the Davie-Oksendal
Lemma extends straightforwardly to several operators \cite[Lemma
4.2]{mattilaparamonov}. By Lemma \ref{permutalfa} we have
$p_{\alpha,\,i}(x_1,x_2,x_3)=0$ for $x_1, x_2$ and $x_3$ in $E$ and
$i \neq 1 $ and thus \eqref{mevescalar} yields
$$
\int|R_{\alpha,\, \ep}^i(\mu)(x)|^2\,d\mu(x) \le C\, \|\mu\|, \quad
\epsilon>0 .
$$
Replacing in the above inequality $\mu$ by $\chi_B\,\mu$ where $B$
is any ball we get
$$
\int_B |R_{\alpha,\, \ep}^i(\chi_B \,\mu)(x)|^2\,d\mu(x) \le C\,
\mu(B), \quad \epsilon>0.
$$
By the standard $T(1)$-Theorem of \cite{dajourne} we conclude that
$R_\alpha^i$ is bounded on~$L^2(\mu)$. \qed

\subsection{ Lipschitz harmonic capacity is not comparable to the capacity associated with a scalar Riesz-potential}
\label{counterexamplelipschitz} Inequality \eqref{mark} says that in
the plane, analytic capacity can be characterized in terms of either
capacity $\kappa_i$, $i=1,2$.  In particular this implies a weaker
qualitative statement, namely, that if $E$ is a compact set in the
plane and there exists a non-zero distribution $T$ supported on $E$
with  bounded potential $x_i/|x|^2 * T$, for $i=1$ or $i=2$, then
there exists another non-zero distribution $S$ supported on~$E$ with
bounded potentials $x_i/|x|^2 * S$, $i=1,2$.

In $\Rn$ Lipschitz harmonic capacity is an excellent replacement for
analytic capacity. Thus one may ask whether Lipschitz harmonic
capacity can be described in terms of one of the capacities
associated with a component of the kernel $x/|x|^{n}$ in which the
growth condition $n-1$
has been required on the distributions involved. In a qualitative
way we ask the following question. Assume that $E$ is a compact set
in $\Rn$ and that there exists a non-zero distribution $T$ supported
on $E$ with growth $n-1$ and bounded potential $x_n/|x|^n * T$. Is
it true that there exists another non-zero distribution $S$
supported on $E$ with bounded vector valued potential $x/|x|^n * T$
? The answer is no for $n \ge 3$. We describe the example in
${\mathbb R}^3 $.

\begin{prop}
 There exists a compact set $E\subset\mathbb{R}^3$ which supports a non-zero distribution $T$ with growth $2$ and bounded
scalar Riesz potential $x_3/|x|^3 *T$, but does not support any
non-zero distribution $S$ with bounded vector valued Riesz potential
$x/|x|^3 *S$.
\end{prop}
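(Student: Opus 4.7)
The plan is to construct $E$ as a $2$-Ahlfors regular, purely $2$-unrectifiable compact subset of $\mathbb{R}^3$ that is confined to a thin horizontal slab $\{|x_3|\le\delta\}$ for some small $\delta>0$. A concrete realization is a self-similar Cantor dust whose horizontal shadow mimics a Garnett-type $4$-corner pattern (forcing pure $2$-unrectifiability) and whose vertical oscillation is kept below $\delta$.

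To verify $\kappa(E)=0$---equivalently, that no non-zero distribution $S$ supported on $E$ has bounded vector-valued Riesz potential---I would invoke the appropriate theorem relating Lipschitz harmonic capacity to unrectifiability (the Nazarov--Tolsa--Volberg theorem, or, for this specific Cantor construction, an iterated tangent-measure or curvature-type argument along the lines of Mattila--Paramonov).

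For the scalar-potential side, take $\mu=\mathcal{H}^2|_E$, which has growth $2$ by Ahlfors regularity. The key tool is Lemma~\ref{permutacions}: $p_3(y_1,y_2,y_3)=0$ whenever $y_1,y_2,y_3$ share the same $x_3$-coordinate, and writing $a=y_2-y_1$, $b=y_3-y_2$, the smallness $|a_3|,|b_3|\le 2\delta$ on the slab gives the quantitative pointwise bound
\[
p_3(y_1,y_2,y_3)\le\frac{C\,\delta^{2}\max(|a|,|b|)^{2}}{|a|^{2}|b|^{2}|a+b|^{2}}.
\]
Plugging this into identity~\eqref{l2perm} and carrying out a dyadic integration across scales---with careful treatment of near-degenerate triangles, where the denominator $|a+b|$ becomes much smaller than $|a|,|b|$---bounds $p_{3,\varepsilon}(\mu)$ by a constant multiple of $\|\mu\|$ uniformly in $\varepsilon$, yielding $L^2(\mu)$-boundedness of the scalar transform $R^3$. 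Applying the Davie--Oksendal lemma exactly as in~\eqref{comppositiu} then produces a non-zero positive Radon measure $\nu$ on $E$ with $\nu(E)\ge c\,\mu(E)$ and pointwise bounded scalar potential $(x_3/|x|^3)*\nu$. Setting $T=\nu$ completes the construction.

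The main obstacle is the inherent tension in the geometry of $E$: if $\delta\to 0$ then $E$ collapses into a $2$-plane, becoming trivially $2$-rectifiable so that $\kappa(E)>0$, whereas for too large $\delta$ the smallness of $p_3$ is lost and $R^3$ is not $L^2(\mu)$-bounded. Realizing the sweet spot---$E$ genuinely $2$-unrectifiable yet flat enough for the scalar transform to be controlled---is what the Cantor construction accomplishes, and the hardest step is controlling the contribution of near-collinear triples to $p_{3,\varepsilon}(\mu)$, where the pointwise estimate above can fail to be effective and one must exploit the Ahlfors regularity of $\mu$ to bound the relevant triple integral.
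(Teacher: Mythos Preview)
Your approach has a genuine gap at its core: you are applying the symmetrization machinery of Section~\ref{sectionpermu} (Lemma~\ref{permutacions} and identity~\eqref{l2perm}) to the wrong kernel. The quantity $p_3$ in Lemma~\ref{permutacions} is the symmetrization of $x_3/|x|^{2}$, the scalar Riesz kernel of homogeneity~$-1$, and identity~\eqref{l2perm} links it to the $L^2$ norm of \emph{that} operator. The proposition, however, concerns the kernel $x_3/|x|^{3}$ in~$\mathbb{R}^3$, of homogeneity~$-2$. Its symmetrization would be $p_{2,3}$ in the notation of Section~6.1, and for $\alpha>1$ the permutation quantity is not known to be nonnegative; in fact the positivity that makes the curvature method work is precisely what breaks down above homogeneity~$-1$. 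So your pointwise bound on $p_3$ is bounding the wrong object, and there is no analogue of~\eqref{l2perm} available to convert it into $L^2(\mu)$-boundedness of the operator you actually need.

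The paper's construction avoids the symmetrization route entirely and, incidentally, dissolves the geometric ``tension'' you describe. One takes $E=K\times[-1,1]$, where $K$ is the planar four-corner Cantor set of dimension~$1$; this set is \emph{thick} in the $x_3$-direction, not thin. The natural measure $\mu=\mathcal{H}^2|_{K\times\mathbb{R}}$ is doubling, has $2$-growth, and is translation-invariant in~$x_3$, so one computes $R^3(1)=0$ directly from the oddness of the kernel in the third variable. The standard $T(1)$-theorem then yields $L^2$-boundedness of $R^3$ with no curvature argument, and Davie--Oksendal produces the required~$T$. That $E$ supports no distribution with bounded vector potential follows from $\gamma(K)=0$ (hence $\kappa(K)=0$ in the plane) together with the product result of Mattila--Paramonov~\cite{mattilaparamonov}, which transfers zero Lipschitz harmonic capacity from $K$ to $K\times[-1,1]$.
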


\begin{proof}
Let $K\subset H=\{(x_1,x_2,x_3)\in\mathbb{R}^3: x_3=0\}$ be the classical
$1$-dimensional planar Cantor set defined by taking the ``corner
quarters" at each generation. Then $K$ has finite positive length
but zero analytic capacity (see \cite{garnett},
\cite{garnettcantor} or \cite{ivanov}). In particular, $K$ has
zero Lipschitz harmonic capacity and by \cite{mattilaparamonov}
the same happens to $E=K\times [-1,1]$. Thus $E$ does not
support any distribution $S$ with bounded vector valued Riesz
potential $x/|x|^3 *S$.

Let $\mu$ denote $2$-dimensional Hausdorff measure restricted to
$K \times \mathbb{R} \subset {\mathbb{R}}^3$ and let $\nu$ denote
the restriction of $\mu$ to $E$.  It is a simple matter to check
that $\mu$ satisfies the growth condition
$$
\mu(B(x,r)) \le C\,r^2,\quad x \in K \times \mathbb{R},\quad 0
< r .
$$
Although the reverse inequality does not hold for large $r$,
$\mu$ is a doubling measure. Indeed, $\mu(B(x,r))$ is comparable
to $r^2$ for $0 < r \leq 1$ and to $r$ for $1 \leq r$. Our goal
is to show that the scalar Riesz singular integral operator $R^3$
with kernel $k^3(x)= x_3/|x|^3$ is bounded on $L^2(\nu)$. Once
this is established the Davie-Oksendal lemma (see \cite[Theorem 33
]{llibrechrist} or \cite[Lemma 4.2]{verdera}) provides a
non-negative function $b \in L^\infty(\nu)$ such that $x_3/|x|^3*
b \nu $ is in $L^\infty({\mathbb{R}}^3)\,,$ which completes the
proof.

The boundedness of $R^3$ on $L^2(\nu)$ follows directly from the boundedness of $R^3$ on $L^2(\mu)$. To show
this we check that $R^3(1)=0$ and then we apply the standard
$T(1)$-Theorem for doubling measures (see \cite{dajourne}). The computation of $R^3(1)$
is performed as follows. Set $K(x,\epsilon)= \{(y_1,y_2)\in K :
|x_1-y_1|>\epsilon \quad \text{and}\quad |x_2-y_2|>\epsilon \}$,
Then
\begin{equation*}
\begin{split}
R^3(1)(x)&= \lim_{\epsilon\rightarrow 0} \int_{|y-x|> \epsilon}
\frac{x_3-y_3}{|x-y|^3}\,d\mu(y)\\*[3pt]
&= \lim_{\epsilon\rightarrow 0}
\int_{K(x,\epsilon)} \left(\int_{|y_3-x_3|> \epsilon}
\frac{x_3-y_3}{|x-y|^3}\,dy_3 \right)\,dH^1(y_1,y_2) = 0,
\end{split}
\end{equation*}
for each $x \in K \times \mathbb{R}$.
\end{proof}

\subsection*{Remarks}
\begin{itemize}
\item Notice that  in the above
example one obtains that $R^3$ is bounded on $L^2(\nu)$, while the
whole vector $R$ is not bounded on $L^2(\nu)$. Therefore, the above
example shows that corollary \ref{l2bound} does not hold if $n\geq
3$, namely, we cannot get $L^2(\nu)$ boundedness of the vector
valued Riesz operator $R_{n-1}$ associated with a Riesz kernel of
homogeneity $-(n-1) $ from $L^2(\nu)$ boundedness of only one
component $R^i_{n-1}$.

\item It is an open question  to decide whether, for $n\geq 3$, Lipschitz
harmonic capacity is comparable to the capacities associated with
$(n-1)$-components of the vector valued Riesz potential $x/|x|^n
* T$.
\end{itemize}

\subsection{Finiteness of the capacities \boldmath$\kappa_i$}
\label{finiteness}

Indeed, we give a proof of a more general result, stating that for
compact sets $E\subset\Rn$, $0<\alpha<n$ and $1\leq i\leq n$, the
capacities
$$\kappa_{\alpha,\, i}(E)=\sup\left\{|\langle T,1\rangle|:\operatorname{spt}(T)\subset E,\,\left\|\frac{x_i}{|x|^{1+\alpha}}*T\right\|_\infty\leq 1\right\},
$$
are finite.

\begin{prop}\label{growth}
For any cube $Q\subset \Rn$, $0<\alpha<n$ and $1\leq i\leq n$, we
have $$\kappa_{\alpha,\,i}(Q)\leq Cl(Q)^\alpha.$$
\end{prop}

\begin{proof}
 Without loss of generality assume $i=1$. Assume also
momentarily that the dimension $n$ is odd, say $n=2k+1$.  Our
argument uses a reproduction formula for test functions involving
the kernel $k^i(y)=y_i/|y|^{1+\alpha}$, $1\leq i\leq n$, \cite
[Lemma 3.1]{laura1}. For a test function $g$, the formula reads
\begin{equation}\label{reproductionformula}
g(x)=c_{n,\alpha}\sum_{j=1}^n\left (\Delta^k\partial_j
g*\frac{1}{|y|^{n-\alpha}}*k^j\right)(x),
\end{equation}
for some constant $c_{n,\alpha}$ depending only on the dimension $n$
and on $\alpha$. For $n=2k$, there is an analogous reproduction
formula that settles the even case \cite[Lemma~3.1]{laura1}.

Let $T$ be a real distribution supported on $Q$ such that $k^1*T\in
L^{\infty}(\Rn)$. Write the cube $Q$ as $Q=I_1\times Q'$, with $I_1$
being an interval in $\mathbb{R}$ and $Q'$ an $n-1$ dimensional cube
in $\mathbb{R}^{n-1}$, and let $\varphi_Q \in {\mathcal
C}^{\infty}_0(2Q)$ be such that $\|\partial^s\varphi_Q\|_\infty\leq
C_sl(Q)^{-|s|}$ and
$$\varphi_Q(x)=\varphi_1(x_1)\varphi_2(x_2,\dotsc,x_n)$$
with $\varphi_1(x_1)=1$ on $I_1$,  $\varphi_1(x_1)=0$ on $(2I_1)^c$
and $\int^\infty_{-\infty}\varphi_1=0$, and $\varphi_2\geq 0$,
$\varphi_2\equiv 1$ on~$Q'$ and  $\varphi_2\equiv  0$ on $(2Q')^c$.
Then, since our distribution $T$ is supported on $Q$, using the
reproduction formula \eqref{reproductionformula},
\begin{equation*}
\begin{split}
|\langle T,1\rangle |&=|\langle T,\varphi_Q\rangle |\leq C\sum_{j=1}^n \left|\left\langle T,
\Delta^k\partial_j \varphi_Q*\frac{1}{|y|^{n-\alpha}}*k^j
\right\rangle \right|\\*[7pt]
&= C\left|\left\langle k^1*T, \Delta^k\partial_1
\varphi_Q*\frac{1}{|y|^{n-\alpha}}\right\rangle \right|+ C\sum_{j=2}^n \left|\left\langle T,
\Delta^k\partial_j \varphi_Q*\frac{1}{|y|^{n-\alpha}}*k^j
\right\rangle\right|\\*[7pt]
&=A+B.
\end{split}
\end{equation*}
We first estimate the term $A$. We have
\begin{multline*}
\int (k^1*T)(x)\, \Delta^k\partial_1
\varphi_Q*\frac{1}{|y|^{n-\alpha}}(x)\,dx = \int_{3Q} (k^1*T)(x)\,
(\Delta^k\partial_1 \varphi_Q*\frac{1}{|y|^{n-\alpha}})(x)\,dx
\\*[7pt]
+\int_{\Rn\setminus 3Q} (k^1*T)(x)\, (
\varphi_Q* \Delta^k\partial_1(\frac{1}{|y|^{n-\alpha}}))(x)\,dx.
\end{multline*}
 Let $Q_0$ be the unit cube centered at~$0$. Dilating to bring the integrals on $3Q_0$ and $2Q_0$, and using $|\partial^s\varphi_Q |\leq C_sl(Q)^{-|s|}$, we get
\begin{equation*}
\begin{split}
A&\leq
\|k^1*T\|_\infty\left(\int_{3Q}\int_{2Q}\frac{|\Delta^k\partial_1\varphi_Q(y)|}{|x-y|^{n-\alpha}}\,dy\,dx+ \int_{\Rn\setminus
3Q}\int_{2Q}\frac{|\varphi_Q(y)|}{|x-y|^{2n-\alpha}}\,dy\,dx\right)\\*[7pt]
&\leq Cl(Q)^{\alpha}\left(\int_{3Q_0}\int_{2Q_0}\frac{dy\,dx}{|x-y|^{n-\alpha}}+
\int_{\Rn\setminus
3Q_0}\int_{2Q_0}\frac{dy\,dx}{|x-y|^{2n-\alpha}}\right)\\*[7pt]
&\leq Cl(Q)^{\alpha}.
\end{split}
\end{equation*}

We turn now to the estimate of $B\,.$ The homogeneous differential
operator $\Delta^k$ can be written as $\Delta^k = \sum_{|s|=2k} a_s
\,\partial^s$, for certain constants~$a_s$. Divide the set of
multi-indexes~$s$ of length~$2k$ into two classes~$I$ and~$J$
according to whether $s_1 \ge 1$ or $s_1=0$. In other words, $s
\in I$ if $\partial^s$ contains at least one partial derivative with
respect to first variable. Thus $\Delta^k = \sum_{s \in I} a_s
\,\partial^s + \sum_{s \in J} a_s \,\partial^s $, and so
$B=B_1+B_2$ where
$$
B_1 = C\sum_{j=2}^n \left|\left\langle T,\sum_{s \in I} a_s \,\partial^s\partial_j
\varphi_Q*\frac{1}{|y|^{n-\alpha}}*k^j\right\rangle\right|
$$
and
$$
B_2 = C\sum_{j=2}^n \left|\left\langle T,\sum_{s \in J} a_s \,\partial^s\partial_j
\varphi_Q*\frac{1}{|y|^{n-\alpha}}*k^j\right\rangle\right| .
$$
To estimate $B_1$ we bring in each term of the sum in $s \in I$ one
derivative with respect to the first variable into the kernel $k^j$
and use $\partial_1 k^j= \partial_j k^1$ to take back a derivative
with respect to $j$ into $\varphi_Q$.  The effect of these moves
is to replace $k^j$ by $k^1$. Therefore
$$
B_1 = C\sum_{j=2}^n \left|\left\langle k^1*T,\sum_{|s|=2k} b_s \,\partial^s
\partial_j \varphi_Q*\frac{1}{|y|^{n-\alpha}}\right\rangle\right|,
$$
for some numbers $b_s$. This expression can be estimated as we did
before with $A$.

To estimate $B_2$ we need to replace in some way the kernel $k^j$ by
$k^1$. We do this by showing that, for each $j$ there exists a
function $\psi^j_Q \in \cc^\infty_0(2Q)$ satisfying
\begin{equation}\label{identity}
k^j*\varphi_Q=k^1*\psi^j_Q,\quad 1\leq j\leq n,
\end{equation}
and  $\|\partial^s\psi^j_Q\|_\infty\leq C_s l(Q)^{-|s|}$. Before
proving \eqref{identity} we show how to estimate $B_2$.

By \eqref{identity}
\begin{equation*}
\begin{split}
B_2 &= C\sum_{j=2}^n \left|\left \langle T,\sum_{s \in J} a_s \,\partial^s \partial_j
\varphi_Q*\frac{1}{|y|^{n-\alpha}}*k^j\right\rangle\right|\\*[7pt]
&=  C\sum_{j=2}^n
\left|\left\langle T,\sum_{s \in J} a_s \,\partial^s\partial_j
\psi^j_Q*\frac{1}{|y|^{n-\alpha}}*k^1\right\rangle\right|\\*[7pt]
& = C\sum_{j=2}^n
\left|\left\langle k^1*T,\sum_{s \in J} a_s \,\partial^s \partial_j
\psi^j_Q*\frac{1}{|y|^{n-\alpha}}\right\rangle\right|,
\end{split}
\end{equation*}
which can be estimated as the term $A$.

We are left with proving \eqref{identity}. Taking Fourier transforms
in~\eqref{identity} we obtain for some constant~$a$,
$$
a\,\hat{\varphi}_Q(\xi)\xi_j=\hat{\psi^j_Q}(\xi)\xi_1,
$$
which becomes
$$
a\,\partial_j\varphi_Q=\partial_1\psi^j_Q .
$$
Hence, for the non-trivial case $2 \le j\le n$,
$$
\psi^j_Q(x)=a\int_{-\infty}^{x_1}\partial_j\varphi_Q(t,x_2,\dotsc,x_n)\,dt=a\,\partial_j\varphi_2(x_2,\dotsc,x_n)\int_{-\infty}^{x_1}\varphi_1(t)\,dt,
$$
and the key remark is that the function above has support contained
in $2Q$ because the integral of $\varphi_1$ on the real line
vanishes.
\end{proof}

We conclude with the following corollary.

\begin{co}
For any compact set $E\subset\Rn$, $0<\alpha<n$ and $1\leq i\leq n$,
we have $\kappa_{\alpha,\,i}(E)\leq C\operatorname{diam}(E)^\alpha$.
\end{co}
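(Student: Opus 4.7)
The corollary is essentially a direct consequence of Lemma \ref{growth} together with the monotonicity of the set function $\kappa_{\alpha,i}$. The plan is simply to enclose $E$ in a cube whose side length is comparable to $\operatorname{diam}(E)$ and apply the previous lemma.

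First I would observe that $\kappa_{\alpha,i}$ is monotone with respect to set inclusion. Indeed, if $E \subset F$, then any real distribution $T$ with $\operatorname{spt}(T) \subset E$ satisfies $\operatorname{spt}(T) \subset F$ as well, so the supremum defining $\kappa_{\alpha,i}(F)$ is taken over a larger class than the one defining $\kappa_{\alpha,i}(E)$, giving $\kappa_{\alpha,i}(E) \le \kappa_{\alpha,i}(F)$.

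Next, fix any point $x_0 \in E$ and let $Q$ be the cube centered at $x_0$ of side length $l(Q) = 2\operatorname{diam}(E)$. Every point of $E$ lies within distance $\operatorname{diam}(E)$ of $x_0$, hence within $Q$, so $E \subset Q$. By monotonicity and Lemma \ref{growth},
$$
\kappa_{\alpha,i}(E) \le \kappa_{\alpha,i}(Q) \le C\,l(Q)^{\alpha} = C\,(2\operatorname{diam}(E))^{\alpha} = C'\,\operatorname{diam}(E)^{\alpha},
$$
which is the desired estimate.

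There is no real obstacle here: the substantive content (the reproduction formula argument and the estimates on the kernels) has already been carried out in the proof of Lemma \ref{growth}. The only thing to check is the trivial monotonicity, and then to pick an enclosing cube of the right size.
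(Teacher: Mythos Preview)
Your proof is correct and matches exactly the intended argument: the paper states the corollary without proof, treating it as immediate from Lemma~\ref{growth} via monotonicity and an enclosing cube, which is precisely what you wrote out.
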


When $n=2$ and $\alpha=1$, \eqref{mark} implies that $\kappa_i(E)\le C M^1(E)$, $i=1,2$, where $M$ stands for the one dimensional Hausdorff content.
In general, we do not know whether in the preceding inequality the diameter of
$E$ can be replaced by the $\alpha-$dimensional Hausdorff content of $E$.

\subsection{Localization and growth}
The growth assumption on the distribution $T$ in the localization
lemma (Lemma \ref{localization1}) cannot be completely dispensed
with. Indeed, if $x_i/|x|^2 \in L^\infty(\mathbb{R}^n)$ and one
has the inequality
\begin{equation}
\left\|\frac{x_i}{|x|^2}*\varphi_Q T\right\|_\infty\leq C\left\|
\frac{x_i}{|x|^2}*T\right\|_\infty,
\end{equation}
for all $\varphi_Q \in \cc^\infty_0(Q)$ satisfying the normalization
condition \eqref{normalization},
 then necessarily $T$ has linear growth. This
can be shown by an argument very close to that of the previous
subsection. We only deal with the details of the case $n=2$. The
case of even dimensions is very similar, while the case of odd
dimensions needs some additional care. We also assume $i=1$.

Let $Q$ be square and $\varphi_Q$ a function in $\cc^\infty_0(Q)$
satisfying the normalization condition \eqref{normalization}.
Set
$Q=I_1 \times I_2$ and $\psi(x_1,x_2)= \psi_1(x_1)\psi(x_2)$,
where, for $j=1,2$, $\psi_j \in \cc^\infty_0(I_j)$, $\psi_j =
1$ on $I_j$, $\int^\infty_{-\infty} \psi(x_1) \,dx_1 =0$ and $\|
d^k \psi_j/(dx_j)^k \|_\infty \le C\,l(I_j)^{-k}$, $0 \le k\le
2$. We then have
$$
\langle{T, \varphi_Q}\rangle = \langle{\varphi_Q\,T,1}\rangle=
\langle{\varphi_Q\,T,\psi}\rangle.
$$
We want now to find a function $\chi$ such that $\psi=
k^1*\chi$, where $k^1 = x_1/|x|^2$. Taking the Fourier
transform we get $\hat \psi(\xi) = a(\xi_1/|\xi|^2)\,\hat{\chi}
(\xi)$ for some constant $a\,.$ Hence $\partial_1 \chi =
b\,\Delta\psi$, for some other constant $b$. Thus
\begin{equation*}
\begin{split}
\chi &=b\int_{-\infty}^{x_1} \Delta\psi(t,x_2)\,dt \\*[7pt]
&=b\left(\partial_1 \psi_1(x_1)\,
\psi_2(x_2)+\left(\int_{-\infty}^{x_1} \psi_1(t)\,dt
\right)\,\partial_2^2 \psi_2(x_2)\right).
\end{split}
\end{equation*}
Notice that $\chi$ is supported on $Q$ and $\|\chi\|_\infty \le
C\,l(Q)^{-1}$. Therefore 
$$
|\langle{T, \varphi_Q}\rangle| = |\langle{k^1* \varphi_Q
T,\chi}\rangle| \le C\,\|k^1* \varphi_QT\|_\infty
\,\|\chi\|_{L^1(Q)} \le C\,l(Q).\rlap{\hspace*{1.26cm}\qed}
$$

\subsection{The growth condition for positive measures}
We start by showing that the usual linear growth condition for a
positive Radon measure is equivalent to the linear growth
condition for distributions as defined in \eqref{growthG}. Later
on we treat also the case of the $\alpha$-growth condition for $0<
\alpha < n.$

Given a positive Radon measure $\mu$ set
$$
L(\mu)= \underset{Q} \sup \frac{\mu(Q)}{l(Q)},
$$
where the supremum is taken over all cubes $Q$ with sides parallel
to the coordinate axis.

If $\varphi \in \cc^\infty_0(\Rn),$ then by an inequality of Mazya
\cite[1.2.2, p.
24]{mazya}
$$
|\langle \mu, \varphi \rangle | = |\int \varphi\,d\mu|\le \int
|\varphi|\,d\mu \le C\, L(\mu)\,\int |\nabla^{n-1}\varphi
(x)|\,dx,
$$
where $\nabla^{n-1}\varphi$ denotes, as usual , the vector of all
derivatives $\partial^s \varphi$ of order $|s|=n-1$. Thus
$$
G(\mu) \le C\,L(\mu).
$$
The reverse inequality is immediate. Indeed, given a cube $Q$ let
$\varphi_Q$ be a function in $\cc^\infty_0(2Q)$ such that $1 \le
\varphi_Q$ on $Q$ and $\|\partial^s\varphi_Q\|_\infty \le C_s
\,l(Q)^{-|s|}, \;|s|\ge 0.$ Then
$$
\mu(Q) \le \int \varphi_Q \,d\mu = |\langle \mu, \varphi_Q \rangle |
\le C\,G(\mu)\,l(Q)
$$
because $ C_s^{-1}\, l(Q)^{-1}\,\partial^s \varphi $ is an atom for
$|s|=n-1$, and so $\|\partial^s \varphi\|_{H^1(\Rn)} \le C\,l(Q),
|s|= n-1 .$

We proceed now to treat the case of a general $\alpha$-growth
condition, $0 < \alpha  < n $. Set
$$
L_\alpha(\mu)= \underset{Q} \sup \frac{\mu(Q)}{l(Q)^\alpha},
$$
where the supremum is taken over all cubes $Q$ with sides parallel
to the coordinate axis. We consider first the inequality $
L_\alpha(\mu) \le C\, G_\alpha(\mu)$. The definition of $G_\alpha$
is in \eqref{growthalfa}. Given a cube $Q$ let $\varphi_Q$ be a
function in $\cc^\infty_0(2Q)$ such that $1 \le \varphi_Q$ on $Q$
and $\|\partial^s\varphi_Q\|_\infty \le C_s \,l(Q)^{-|s|}, \;|s|\ge
0.$ We claim that $c\,\varphi_Q$ satisfies the normalization
inequalities \eqref{integerh1} or \eqref{alfah1} for a sufficiently
small positive constant $c$. If this is the case, then
$$\mu(Q) \le \int \varphi_Q \,d\mu = |\langle \mu, \varphi_Q \rangle |\le c^{-1}\,G_\alpha(\mu)\,l(Q).$$

We treat first the case of integer $\alpha$. Clearly $\|\partial^s\varphi_Q\|_{L^1}\le Cl(Q)^{\alpha}$, $|s|=n-\alpha$.
By H\"older's inequality and the fact that Riesz transforms preserve $L^q(\Rn)$, $1<q<\infty$,
$$\|R_j(\partial^s\varphi_Q)\|_{L^1(4Q)}\le Cl(Q)^{\frac n p}\|R_j(\partial^s\varphi_Q)\|_{L^q(\Rn)}
\le Cl(Q)^{\frac n p}\|\partial^s\varphi_Q\|_{L^q(\Rn)}\le
Cl(Q)^\alpha.$$ Then, by the Sublemma in subsection 4.1,
 the function $\varphi_Q$ satisfies the normalization inequalities \eqref{integerh1}.

If $\alpha\notin\Z$, write $\alpha=[\alpha]+\{\alpha\}$, with $[\alpha]\in\Z$ and $0<\{\alpha\}<1$.
  For the claim we have to show that for $|s|=n-[\alpha],$ and $1\le j\le n$,
  \begin{equation}\label{u}\|\partial^s\varphi_Q*\frac 1{|x|^{n-\{\alpha\}}}\|_{L^1(\Rn)}\le l(Q)^\alpha\end{equation}
  \begin{equation}\label{dos}\|R_j(\partial^s\varphi_Q*\frac 1{|x|^{n-\{\alpha\}}})\|_{L^1(\Rn)}\le l(Q)^\alpha.\end{equation}

\noindent Inequality \eqref{u} is proven as follows. By Fubini, $$\int_{4Q}|(\partial^s\varphi_Q*\frac 1 {|x|^{n-\{\alpha\}}})(x)|dx\le Cl(Q)^{\alpha}.$$
 As in the Sublemma, integrating by parts to take one derivative from $\partial^s\varphi_Q$ to the kernel $1 /|x|^{n-\{\alpha\}}$ we obtain
$$\int_{(4Q)^c}|(\partial^s\varphi_Q*\frac 1 {|x|^{n-\{\alpha\}}})(x)|dx\le Cl(Q)^{\alpha},$$ which proves \eqref{u}.

The prove of inequality \eqref{dos} we use that, for some constant
$c=c(n,\alpha)$,
$$R_j(\partial^s\varphi_Q *\frac{1}{|x|^{n-\{\alpha\}}})=c\;\partial^s\varphi_Q*\frac{x_j}{|x|^{n+1-\{\alpha\}}}.$$
This can be easily checked by taking the Fourier transform. Now the
argument described above to prove \eqref{u} applies with small
changes to prove \eqref{dos}.

For the reverse inequality, namely  $ G_\alpha(\mu) \le C\,
L_\alpha(\mu)$, it is convenient to distinguish two
cases.
\begin{itemize}
\item {\sl $\alpha$ is integer.}
The argument is exactly as in the case $\alpha=1$. If $\varphi \in
\cc^\infty_0(\Rn),$ then by an inequality of Mazya \cite[1.2.2, p.
24]{mazya}
$$
|\langle \mu, \varphi \rangle | = |\int \varphi\,d\mu|\le \int
|\varphi|\,d\mu \le C\, L_\alpha(\mu)\,\int
|\nabla^{n-[\alpha]}\varphi (x)|\,dx.
$$
 Thus
$$
G_{\alpha}(\mu) \le C\,L_{\alpha}(\mu).
$$

\item {\sl $\alpha$ is not integer.} If $\varphi \in
\cc^\infty_0(\Rn),$ then by another inequality of Mazya \cite[3.4.1,
p. 134]{mazya}
$$
|\langle \mu, \varphi \rangle | = |\int \varphi\,d\mu|\le \int
|\varphi|\,d\mu \le C\, L_\alpha(\mu)\,\int
|\nabla^{n-[\alpha]}\varphi (x)*\frac 1{|x|^{n-\{\alpha\}}}|\,dx.
$$
 Thus
$$
G_{\alpha}(\mu) \le C\,L_{\alpha}(\mu).
$$

\end{itemize}

\begin{gracies}
The authors are indebted to X.Tolsa for several useful conversations,
which led to the example in section 6.2 and to M. Melnikov for reminding them the well known argument to obtain the first inequality in \eqref{mark}. The authors were partially
supported by grants 2009-SGR-420 (Generalitat de Catalunya) and
MTM2007-60062 (Spanish Ministry of Science).
\end{gracies}

\noindent Departament de Matem\`atiques, Universitat Aut\`onoma de
Barcelona, 08193 Bellaterra (Barcelona), Catalonia.\newline\newline
{\em E-mail:} {\tt mateu@mat.uab.cat}, {\tt laurapb@mat.uab.cat},
{\tt jvm@mat.uab.cat}.

\end{document}